\newtheorem{theorem}{Theorem}
\newtheorem{corollary}{Corollary}
\newtheorem{lemma}{Lemma}
\newtheorem{definition}{Definition}
\newtheorem{example}{Example}
\newtheorem{remark}{Remark}
\newtheorem{question}{Question}
\newtheorem{app}{Application}
\begin{document}
\title{Completely multiplicative functions taking values in $\{-1,1\}$}
\author{Peter Borwein}%
\author{Stephen K.K. Choi}%
\author{Michael Coons}%
\thanks{Research supported in part by grants from NSERC of Canada and MITACS.}%
\subjclass[2000]{Primary 11N25; 11N37 Secondary 11A15}%
\address{Department of Mathematics, Simon Fraser University, B.C., Canada V5A 1S6}
\email{pborwein@cecm.sfu.ca, kkchoi@math.sfu.ca, mcoons@sfu.ca}
\keywords{Liouville Lambda Function, Multiplicative Functions}%
\date{June 12, 2008}
%%%%%%%%%%%%%%%%%%%%%%%%%%%%%%%%%%%%%%%%%%%%%%%%%%%%%%%%%%%%%%%%%%%

\begin{abstract} Define {\em the Liouville function for $A$}, a subset of the primes $P$, by $\lambda_{A}(n) =(-1)^{\Omega_A(n)}$ where $\Omega_A(n)$ is the number of prime factors of $n$ coming from $A$ counting multiplicity. For the traditional Liouville function, $A$ is the set of all primes. Denote $$L_A(n):=\sum_{k\leq n}\lambda_A(n)\quad\mbox{and}\quad R_A:=\lim_{n\to\infty}\frac{L_A(n)}{n}.$$ We show that for every $\alpha\in[0,1]$ there is an $A\subset P$ such that $R_A=\alpha$. Given certain restrictions on $A$, asymptotic estimates for $\sum_{k\leq n}\lambda_A(k)$ are also given. With further restrictions, more can be said. For {\em character--like functions} $\lambda_p$ ($\lambda_p$ agrees with a Dirichlet character $\chi$ when $\chi(n)\neq 0$) exact values and asymptotics are given; in particular $$\quad\sum_{k\leq n}\lambda_p(k)\ll \log n.$$ Within the course of discussion, the ratio $\phi(n)/\sigma(n)$ is considered.\end{abstract}

\maketitle

%%%%%%%%%%%%%%%%%%%%%%%%%%%%%%%%%%%%%%%%%%%%%%%%%%%%%%%%%%%%%%%%%%%

%%%%%%%%%%%%%%%%%%%%%%%%%%%%%%%%%%%%%%%%%%%%%%%%%
\section{Introduction}
%%%%%%%%%%%%%%%%%%%%%%%%%%%%%%%%%%%%%%%%%%%%%%%%%

Let $\Omega(n)$ be the number of distinct prime factors in $n$ (with multiple factors counted multiply). The Liouville $\lambda$--function is defined by
$$\lambda(n) :=(-1)^{\Omega(n)}.$$
So $\lambda(1)=\lambda(4)=\lambda(6) = \lambda(9) =\lambda(10) =1$ and $\lambda(2) = \lambda(5)=\lambda(7) =\lambda(8) = -1.$ In particular, $\lambda (p)=-1$ for any prime $p$. It is well-known (e.g. See \S 22.10 of \cite{HW}) that $\Omega$ is completely additive, i.e, $\Omega (mn)=\Omega (m)+\Omega (n)$ for any $m$ and $n$ and hence $\lambda$ is completely multiplicative, i.e., $\lambda (mn)=\lambda (m)\lambda (n)$ for all $m,n\in\mathbb{N}$. It is interesting to note that on the set of square-free positive integers $\lambda(n)=\mu(n)$, where $\mu$ is the M\"obius function. In this respect, the Liouville $\lambda$--function can be thought of as an extension of the M\"obius function.

Similar to the M\"obius function, many investigations surrounding the $\lambda$--function concern the summatory function of initial values of $\lambda$; that is, the sum %
$$L(x):=\sum_{n\leq x} \lambda(n).$$
Historically, this function has been studied by many mathematicians, including Liouville, Landau, P\'olya, and Tur\'an. Recent attention to the summatory function of the M\"obius function has been given by Ng \cite{Ng2, Ng1}. Larger classes of completely multiplicative functions have been studied by Granville and Soundararajan \cite{GS3,GS1,GS2}.

One of the most important questions is that of the asymptotic order of $L(x)$; more formally, the question is to determine the smallest value of $\vartheta$ for which $$\lim_{x\to\infty}\frac{L(x)}{x^{\vartheta}}=0.$$ It is known that the value of $\vartheta=1$ is equivalent to the prime number theorem \cite{Lan2,Lan3} and that $\vartheta=\frac{1}{2}+\varepsilon$ for any arbitrarily small positive constant $\varepsilon$ is equivalent to the Riemann hypothesis \cite{BCRW} (The value of $\frac{1}{2}+\varepsilon$ is best possible, as $\limsup_{x\to\infty}L(x)/\sqrt{x}>.061867$, see Borwein, Ferguson, and Mossinghoff \cite{BFM1}). Indeed, any result asserting a fixed $\vartheta\in \left(\frac{1}{2},1\right)$ would give an expansion of the zero-free region of the Riemann zeta function, $\zeta(s)$, to $\Re (s)\geq \vartheta$.

Unfortunately, a closed form for determining $L(x)$ is unknown. This brings us to the motivating question behind this investigation: {\em are there functions similar to $\lambda$, so that the corresponding summatory function does yield a closed form?}

Throughout this investigation $P$ will denote the set of all primes. As an analogue to the traditional $\lambda$ and $\Omega$, define {\em the Liouville function for $A\subset P$} by
$$\lambda_{A}(n) =(-1)^{\Omega_A(n)}$$ %
where $\Omega_A(n)$ is the number of prime factors of $n$ coming from $A$ counting multiplicity. Alternatively, one can define $\lambda_{A}$ as the completely multiplicative function with $\lambda_{A}(p) = -1$ for each prime $p \in A$ and $\lambda_A(p)=1$ for all $p\notin A$. Every completely multiplicative function taking only $\pm 1$ values is built this way. The class of functions from $\mathbb{N}$ to $\{-1,1\}$ is denoted $\mathcal{F}(\{-1,1\})$ (as in \cite{GS1}). Also, define $$L_A:=\sum_{n\leq x}\lambda_A(n)\quad\mbox{and}\quad R_A:=\lim_{n \to \infty} \frac{ L_A(x)}{n}.$$

In this paper, we first consider questions regarding the properties of the function $\lambda_A$ by studying the function $R_A$. The structure of $R_A$ is determined and it is shown that for each $\alpha\in[0,1]$ there is a subset $A$ of primes such that $R_A=\alpha$. The rest of this paper considers an extended investigation on those functions in $\mathcal{F}(\{-1,1\})$ which are character--like in nature (meaning that they agree with a real Dirichlet character $\chi$ at nonzero values). Within the course of discussion, the ratio $\phi(n)/\sigma(n)$ is considered.

%%%%%%%%%%%%%%%%%%%%%%%%%%%%%%%%%%%%%%%%%%%%%%%%%
\section{Properties of $L_A(x)$}
%%%%%%%%%%%%%%%%%%%%%%%%%%%%%%%%%%%%%%%%%%%%%%%%%

Define the {\em generalized Liouville sequence} as
$$\mathfrak{L}_A:=\{\lambda_{A}(1), \lambda_{A}(2), \ldots \}.$$
%
%and the {\em generalized Liouville number} as
%
%\begin{equation}\label{L} l_A:=  \sum_{n=1}^\infty \frac{\frac{1}{2}(\lambda_{A}(n)+1)}{2^n},
%\end{equation}
%
%where the numerator of the summands has the effect of mapping $-1\mapsto 0$ and $1\mapsto 1$.

%Questions about the sequence $\mathfrak{L}_A$ can be phrased more conveniently as questions about the number $l_A$.

\begin{theorem} The sequence $\mathfrak{L}_A$ is not eventually periodic.
\end{theorem}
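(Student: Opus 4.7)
The plan is to argue by contradiction. Assume that some $k \ge 1$ and $N$ satisfy $\lambda_A(n+k) = \lambda_A(n)$ for every $n \ge N$. The degenerate case $A = \emptyset$ (where $\lambda_A \equiv 1$ is trivially periodic) must be excluded from the theorem, so we may fix a prime $p_0 \in A$, for which $\lambda_A(p_0) = -1$.

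The first move is to upgrade ``eventually periodic'' to ``eventually constant'' by exploiting complete multiplicativity. Restrict the periodicity to multiples of $k$: for every $j$ with $kj \ge N$, we have $\lambda_A(kj) = \lambda_A(k(j+1))$, so the sequence $j \mapsto \lambda_A(kj)$ is eventually constant, equal to some $c \in \{-1,1\}$. Complete multiplicativity then gives $\lambda_A(kj) = \lambda_A(k)\lambda_A(j)$, so
\[
\lambda_A(j) \;=\; c\,\lambda_A(k) \;=:\; d
\]
for all sufficiently large $j$, say for $j \ge M$.

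The second move extracts the contradiction. Choose a prime $q$ large enough that both $q \ge M$ and $p_0 q \ge M$. Then $\lambda_A(q) = d$ and $\lambda_A(p_0 q) = d$, but complete multiplicativity also forces $\lambda_A(p_0 q) = \lambda_A(p_0)\lambda_A(q) = -d$. Hence $d = -d$, which is impossible.

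The real content of the argument lies entirely in the first step---the observation that testing a multiplicative function's supposed period along the arithmetic progression $k, 2k, 3k, \ldots$ collapses ``eventually periodic'' to ``eventually constant''. I do not foresee any substantive obstacle; the final contradiction is a one-liner, and no deep tool (such as Dirichlet's theorem on primes in progressions) is required.
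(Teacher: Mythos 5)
Your proof is correct and follows essentially the same route as the paper's: both exploit periodicity along the multiples of the period $k$ and then use a prime $p_0\in A$ to force a sign flip via complete multiplicativity, yielding $d=-d$. The only cosmetic difference is that you insert an intermediate ``eventually constant'' step where the paper contradicts directly at $pnk$ versus $nk$; your explicit remark that $A=\varnothing$ must be excluded matches the paper's implicit assumption.
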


\begin{proof} Towards a contradiction, suppose that $\mathfrak{L}_A$ is eventually periodic, say the sequence is periodic after the $M$--th term and has period $k$. Now there is an $N\in\mathbb{N}$ such that for all $n\geq N$, we have $nk>M$. Since $A\neq \varnothing$, pick $p\in A$. Then $$\lambda_A(pnk)=\lambda_A(p)\cdot\lambda_A(nk)=-\lambda_A(nk).$$ But $pnk\equiv nk (\mbox{mod}\ k)$, a contradiction to the eventual $k$--periodicity of $\mathfrak{L}_A$. \end{proof}

\begin{corollary} If $A\subset P$ is nonempty, then $\lambda_A$ is not a Dirichlet character.
\end{corollary}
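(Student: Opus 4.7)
The plan is to derive this immediately from Theorem 1. Recall that a Dirichlet character $\chi$ modulo $k$ is, by definition, periodic with period $k$: $\chi(n+k)=\chi(n)$ for every $n\in\mathbb{N}$. Thus, if $\lambda_A$ were equal to some Dirichlet character $\chi$ modulo some $k$, then the sequence $\mathfrak{L}_A=(\lambda_A(1),\lambda_A(2),\ldots)$ would be periodic with period $k$, and in particular eventually periodic. This directly contradicts the conclusion of Theorem 1, which asserts that $\mathfrak{L}_A$ is not eventually periodic whenever $A$ is nonempty.

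The argument is essentially a one-liner, so there is no real obstacle; the only minor point worth noting is that Dirichlet characters mod $k>1$ vanish on integers sharing a common factor with $k$, while $\lambda_A$ takes values only in $\{-1,1\}$. This is not an obstruction to the argument, since we only need periodicity (not injectivity of the support), and Theorem 1 already rules out any eventually periodic sequence. Alternatively, one could observe that the only Dirichlet character never taking the value $0$ is the principal character modulo $1$, which is identically $1$; but for nonempty $A$ and any $p\in A$ we have $\lambda_A(p)=-1$, so $\lambda_A$ is not this trivial character either. Either route yields the desired contradiction in a single step.
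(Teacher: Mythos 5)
Your proof is correct and matches the paper's exactly: the paper also deduces the corollary directly from the non-periodicity of $\mathfrak{L}_A$ established in Theorem 1, since a Dirichlet character is periodic. Your additional remark about characters vanishing off the coprime residues is a sensible clarification but not needed for the argument.
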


\begin{proof} This is a direct consequence of the non--periodicity of $\mathfrak{L}_A$.
\end{proof}

%Since we now have that $l_A\notin\mathbb{Q}$, a fundamental question arises, which at present we are unable to answer in full generality: is $l_A$ algebraic or transcendental? For a partial answer to this question see \cite{BC1} and \cite{BC2}.

%%%%%%%%%%%%%%%%%%%%%%%%%%%%%%%%%%%%%%%%%%%%%%%%%
%\section{Partial sums of $\lambda_A$}
%%%%%%%%%%%%%%%%%%%%%%%%%%%%%%%%%%%%%%%%%%%%%%%%%

To get more acquainted with the sequence $\mathfrak{L}_A$, we study the partial sums $L_A(x)$ of $\mathfrak{L}_A$, and to study these, we consider the Dirichlet series with coefficients $\lambda_A(n)$.

Starting with singleton sets $\{p\}$ of the primes, a nice relation becomes apparent; for $\Re(s)>1$
\begin{equation}\label{zetasum}
\frac{(1-p^{-s})}{(1+p^{-s})}\zeta(s)=  \sum_{n=1}^\infty \frac{\lambda_{\{p\}}(n)}{n^s},
\end{equation}
and for sets $\{p,q\}$,
\begin{equation}\label{zetasum2}
\frac{(1-p^{-s})(1-q^{-s})}{(1+p^{-s})(1+q^{-s})} \zeta(s)=  \sum_{n=1}^\infty \frac{\lambda_{\{p,q\}}(n)}{n^s}.
\end{equation}

For any subset $A$ of primes, since $\lambda_A$ is completely multiplicative, for $\Re (s) > 1$ we have %
\begin{align} \mathcal{L}_A(s)&:=\sum_{n=1}^\infty \frac{\lambda_{A}(n)}{n^s}
 =  \prod_{p} \left( \sum_{l=0}^\infty \frac{\lambda_{A}(p^l)}{p^{ls}} \right)  \nonumber \\
& =  \prod_{p\in A} \left( \sum_{l=0}^\infty \frac{(-1)^l}{p^{ls}}\right)\prod_{p\not\in A} \left( \sum_{l=0}^\infty \frac{1}{p^{ls}}\right)  =  \prod_{p\in A} \left( \frac{1}{1+\frac{1}{p^s}}\right) \prod_{p\not\in A} \left( \frac{1}{1-\frac{1}{p^s}}\right)  \nonumber \\
& =  \zeta (s)  \prod_{p\in A} \left( \frac{1-p^{-s}}{1+ p^{-s}}\right) . \label{zetasum3}
\end{align}

This relation leads us to our next theorem, but first let us recall a vital piece of notation from the introduction.

\begin{definition} For $A\subset P$ denote
$$R_A:=\lim_{n \to \infty} \frac{ \lambda_{A}(1)+\lambda_{A}(2)+\ldots+\lambda_{A}(n)}{n}.$$
\end{definition}

The existence of the limit $R_A$ is guaranteed by Wirsing's Theorem. In fact, Wirsing in \cite{Wi} showed more generally that every real multiplicative function $f$ with $|f(n)|\le 1$ has a mean value, i.e, the limit
$$\lim_{x\to\infty}\frac{1}{x}\sum_{n \le x}f(n)$$
exists. Furthermore, in \cite{Win} Wintner showed that %
$$\lim_{x\to\infty}\frac{1}{x}\sum_{n \le x}f(n) = \prod_{p} \left( 1+\frac{f(p)}{p} + \frac{f(p^2)}{p^2} +\cdots \right) \left( 1-\frac{1}{p}\right) \neq 0$$
if and only if $\sum_{p}|1-f(p)|/p$ converges; otherwise the mean value is zero. This gives the following theorem.

\begin{theorem} For the completely multiplicative function $\lambda_A(n)$, the limit $R_A$ exists and
\begin{equation}\label{ra}R_A=\begin{cases}\prod_{p\in A}\frac{p-1}{p+1} & \mbox{ if $\sum_{p \in A}p^{-1} < \infty$,} \\0 & \mbox{ otherwise.}\end{cases}
\end{equation}
\end{theorem}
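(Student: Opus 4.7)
The plan is to invoke the two results the authors quote just before the statement and verify that their hypotheses and conclusions specialize correctly to $f=\lambda_A$. Existence of $R_A$ is immediate from Wirsing's theorem, since $\lambda_A$ is a real multiplicative function with $|\lambda_A(n)|\le 1$; so the entire content of the theorem is to identify the value of the limit, and Wintner's dichotomy is custom--made for this.

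To apply Wintner's theorem I would compute the series $\sum_p |1-f(p)|/p$ with $f=\lambda_A$. Since $\lambda_A(p)=-1$ for $p\in A$ and $\lambda_A(p)=1$ for $p\notin A$, each summand is $2/p$ when $p\in A$ and $0$ otherwise. Hence Wintner's convergence criterion $\sum_p|1-f(p)|/p<\infty$ reduces to $\sum_{p\in A}1/p<\infty$. When this fails, Wintner gives $R_A=0$ directly, which is the second case of the theorem.

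When $\sum_{p\in A}p^{-1}<\infty$, I would evaluate Wintner's Euler product
$$\prod_p\left(1+\frac{f(p)}{p}+\frac{f(p^2)}{p^2}+\cdots\right)\left(1-\frac{1}{p}\right)$$
factor by factor, exactly as was done in (\ref{zetasum3}). For $p\notin A$ the inner sum is the geometric series $1/(1-p^{-1})$, which cancels the $(1-p^{-1})$ factor and contributes $1$. For $p\in A$ the inner sum is the alternating geometric series $1/(1+p^{-1})$, and combining with $(1-p^{-1})$ gives $(p-1)/(p+1)$. Taking the product over all primes yields $\prod_{p\in A}(p-1)/(p+1)$, which is the first case of the theorem.

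The only point that deserves a brief sanity check is that this infinite product is well defined (and nonzero, consistently with Wintner's assertion) under the hypothesis $\sum_{p\in A}p^{-1}<\infty$. Writing $\log\frac{p-1}{p+1}=-\tfrac{2}{p}+O(p^{-2})$, the sum of logarithms over $p\in A$ converges absolutely exactly when $\sum_{p\in A}p^{-1}$ does, so the product converges to a positive real number. With these observations the statement follows from the cited theorems without further work; the main conceptual step is just the per--prime computation of the Wintner factors, which is essentially the identity already recorded in~(\ref{zetasum3}).
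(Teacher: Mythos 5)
Your proposal is correct and follows exactly the route the paper takes: the paper derives this theorem directly from the quoted Wirsing (existence) and Wintner (dichotomy) results, and your per--prime evaluation of the Wintner factors matching the computation in~(\ref{zetasum3}) is precisely the intended argument. The added check that the product converges to a positive limit when $\sum_{p\in A}p^{-1}<\infty$ is a reasonable small supplement, not a deviation.
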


\begin{example} For any prime $p$, $R_{\{p\}} = \frac{p-1}{p+1}.$
\end{example}

To be a little more descriptive, let us make some notational comments. Denote by $\mathcal{P}(P)$ the power set of the set of primes. Note that $$\frac{p-1}{p+1}=1-\frac{2}{p+1}.$$ Recall from above that $R:\mathcal{P}(P)\to\mathbb{R}$, is defined by $$R_A:=\prod_{p\in A}\left(1-\frac{2}{p+1}\right).$$ It is immediate that $R$ is bounded above by 1 and below by 0, so that we need only consider that $R:\mathcal{P}(P)\to[0,1]$. It is also immediate that $R_\varnothing=1$ and $R_P=0$.

\begin{remark}For an example of a subset of primes with mean value in $(0,1)$, consider the set $K$ of primes defined by $$K:=\left\{p_n\in P: p_n=\min_{q>n^3}\{q\in P\}\mbox{ for } n\in\mathbb{N}\right\}.$$ Since there is always a prime in the interval $(x,x+x^{5/8}]$ (see Ingham \cite{Ing1}), these primes are well defined; that is, $p_{n+1}>p_n$ for all $n\in\mathbb{N}$. The first few values give $$K=\{ 11, 29, 67, 127, 223, 347, 521, 733, 1009, 1361,\ldots\}.$$ Note that $$\frac{p_n-1}{p_n+1}>\frac{n^3-1}{n^3+1},$$ so that $$R_k=\prod_{p\in K}\left(\frac{p-1}{p+1}\right)\geq\prod_{n=2}^\infty\left(\frac{n^3-1}{n^3+1}\right)=\frac{2}{3}.$$ Also $R_K<(11-1)/(11+1)=5/6,$ so that $$\frac{2}{3}\leq R_K<\frac{5}{6},$$ and $R_K\in(0,1)$.
\end{remark}

There are some very interesting and important examples of sets of primes $A$ for which $R_A=0$. Indeed, results of von Mangoldt \cite{vMan1} and Landau \cite{Lan2,Lan3} give the following equivalence.

\begin{theorem} The prime number theorem is equivalent to $R_P=0$.
\end{theorem}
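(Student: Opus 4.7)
The plan is to reduce to the classical equivalence, due to Landau, between PNT and the Möbius estimate $M(x):=\sum_{n\le x}\mu(n)=o(x)$. Since $R_P$ is by definition $\lim_{x\to\infty} L(x)/x$, where $L(x):=\sum_{n\le x}\lambda(n)$, the theorem amounts to verifying the chain
$$\text{PNT} \iff M(x)=o(x) \iff L(x)=o(x),$$
in which the first equivalence is classical and the second is the real content.

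For this step I would use the coefficient identity
$$\lambda(n)=\sum_{d^2 \mid n}\mu(n/d^2),$$
which comes either from the elementary fact that $\sum_{d\mid n}\lambda(d)$ is the indicator of squares (followed by Möbius inversion), or directly from reading off the Dirichlet-series identity $\mathcal{L}_P(s)=\zeta(2s)/\zeta(s)$—the specialization of \eqref{zetasum3} to $A=P$—together with its inverse $1/\zeta(s)=\mathcal{L}_P(s)/\zeta(2s)$. Summing the two coefficient identities over $n\le x$ produces the symmetric pair
$$L(x)=\sum_{d\le\sqrt{x}}M(x/d^2),\qquad M(x)=\sum_{d\le\sqrt{x}}\mu(d)\,L(x/d^2).$$

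From either identity the equivalence $L(x)=o(x)\Longleftrightarrow M(x)=o(x)$ follows by a standard truncation argument: given $\varepsilon>0$, choose $T$ so large that $\sum_{d>T}1/d^2<\varepsilon$; bound the tail $d>T$ by the trivial estimate $|M(y)|,|L(y)|\le y$, and bound the head $d\le T$ by the hypothesized $o(y)$ estimate applied uniformly for $y\ge x/T^2$. The summability of $1/d^2$ makes the argument symmetric in $L$ and $M$, so no genuine analytic obstacle arises. In short, the theorem is simply a packaging of Landau's equivalence PNT $\iff M(x)=o(x)$ together with a clean Möbius-type inversion between $\lambda$ and $\mu$; the main point worth recording is that the inversion can be read directly off the Dirichlet-series formula \eqref{zetasum3}.
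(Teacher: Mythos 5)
Your proposal is correct, and it actually supplies more than the paper does: the paper gives no proof of this theorem at all, simply asserting that it follows from the cited results of von Mangoldt and Landau (which establish the equivalence of the prime number theorem with $\sum_{n\le x}\mu(n)=o(x)$ and with $\sum \mu(n)/n=0$). Your reduction of $L(x)=o(x)$ to $M(x)=o(x)$ is the standard and correct way to complete the chain. The identity $\lambda(n)=\sum_{d^2\mid n}\mu(n/d^2)$ does follow from $\mathcal{L}_P(s)=\zeta(2s)/\zeta(s)$, the inverse identity from $1/\zeta(s)=\mathcal{L}_P(s)/\zeta(2s)$, and the two hyperbola-type sums
$$L(x)=\sum_{d\le\sqrt{x}}M\left(\frac{x}{d^2}\right),\qquad M(x)=\sum_{d\le\sqrt{x}}\mu(d)\,L\left(\frac{x}{d^2}\right)$$
are both valid; the truncation argument using $\sum_{d>T}d^{-2}<\varepsilon$ together with the trivial bounds $|M(y)|,|L(y)|\le y$ is exactly what makes the transfer symmetric and painless. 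The only caveat is that you are still outsourcing the genuinely hard half (PNT $\iff M(x)=o(x)$) to Landau, but that is precisely what the paper does as well, so your write-up is a faithful and more explicit version of the intended argument.
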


We may be a bit more specific regarding the values of $R_A$, for $A\in\mathcal{P}(P)$. We will show that for each $\alpha\in (0,1)$, there is a set of primes $A$ such that $$R_A=\prod_{p\in A}\left(\frac{p-1}{p+1}\right)=\alpha.$$

\begin{lemma}\label{allp} Let $p_n$ denote the $n$th prime. For all $k\in\mathbb{N}$, $R_{[k,\infty)}=0.$
\end{lemma}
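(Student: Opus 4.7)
The plan is to reduce the statement directly to the divergence criterion already established in the preceding theorem, namely that $R_A=0$ whenever $\sum_{p\in A}p^{-1}=\infty$. Under the notation in the lemma, $[k,\infty)$ evidently denotes the set of primes $\{p_k,p_{k+1},p_{k+2},\ldots\}$, i.e.\ all primes from the $k$-th one onward, so what has to be checked is simply that $\sum_{n\geq k}1/p_n=\infty$.

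First I would invoke Euler's classical result that $\sum_{p\in P}1/p$ diverges. Since removing the finitely many terms $1/p_1,\ldots,1/p_{k-1}$ does not affect divergence, we immediately obtain
\begin{equation*}
\sum_{p\in [k,\infty)}\frac{1}{p}=\sum_{n\geq k}\frac{1}{p_n}=\infty.
\end{equation*}
Plugging $A=[k,\infty)$ into the second branch of formula \eqref{ra} then yields $R_{[k,\infty)}=0$, finishing the argument.

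The proof is therefore essentially a one-liner once one has the Wirsing/Wintner-based characterization in hand, and there is no real obstacle: the only thing to verify is that the hypothesis $\sum_{p\in A}p^{-1}=\infty$ applies. An alternative, entirely self-contained route (which I would mention only if the authors wanted to avoid citing the previous theorem) is to show directly that the infinite product $\prod_{n\geq k}(p_n-1)/(p_n+1)$ equals zero, by noting that its logarithm is $-\sum_{n\geq k}\log(1+2/(p_n-1))$, whose terms are comparable to $2/p_n$, so divergence of $\sum 1/p_n$ forces the product to vanish. Either way, the substantive input is Euler's divergence theorem for the reciprocals of the primes.
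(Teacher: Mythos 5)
Your proof is correct and matches the paper's argument: both reduce the claim to the divergence of $\sum_{p\in A}1/p$ for the tail set $A$ and then apply the second branch of \eqref{ra}. The only cosmetic difference is that the paper quotes the Mertens estimate $\sum_{k\le p\le x}1/p=\log\log x+O_k(1)$ while you invoke Euler's divergence theorem directly; the substance is identical.
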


\begin{proof} Let $A=P\cap [k,\infty)$. For any $x \ge k$, we have
$$\sum_{\substack{p \le x \\ p\in A}} \frac{1}{p} = \sum_{k \le p \le x} \frac{1}{p}  = \log\log x + O_k(1).$$ Since this series diverges, so $R_A=0$ by \eqref{ra}.
\end{proof}

\begin{theorem}\label{allga} The function $R:\mathcal{P}(P)\to[0,1]$ is surjective. That is, for each $\alpha\in [0,1]$ there is a set of primes $A$ such that $R_A=\alpha.$ \end{theorem}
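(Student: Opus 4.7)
The endpoints are immediate: $A=\varnothing$ gives the empty product $R_\varnothing=1$, while $A=P$ gives $R_P=0$ by Lemma~\ref{allp} with $k=2$ (since $\sum_p 1/p$ diverges). For $\alpha\in(0,1)$, I will construct $A\subset P$ with $R_A=\alpha$ by a greedy procedure carried out in logarithmic coordinates.

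Set $\beta:=-\log\alpha\in(0,\infty)$ and, for each prime $p$, put $a_p:=\log\frac{p+1}{p-1}$. Then $a_p>0$, and the expansion $a_p=\frac{2}{p}+O(p^{-2})$ shows both that $a_p\to 0$ as $p\to\infty$ and that $\sum_p a_p$ diverges, by comparison with $\sum_p 1/p$. The target relation $\prod_{p\in A}\frac{p-1}{p+1}=\alpha$ is equivalent to $\sum_{p\in A}a_p=\beta$, so it suffices to realize $\beta$ as such a subset sum.

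Enumerate the primes as $p_1<p_2<\cdots$ and build $A$ inductively. Set $T_0=0$ and $A_0=\varnothing$; having defined $A_{n-1}$ and $T_{n-1}=\sum_{p\in A_{n-1}}a_p$, adjoin $p_n$ to $A_n$ if $T_{n-1}+a_{p_n}\le\beta$, and otherwise set $A_n=A_{n-1}$. Let $A:=\bigcup_n A_n$ and $T:=\lim_n T_n=\sum_{p\in A}a_p$. By construction $T_n$ is nondecreasing and bounded above by $\beta$, so the limit $T$ exists and satisfies $T\le\beta$.

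The main point is to rule out $T<\beta$. Suppose, for contradiction, that $\epsilon:=\beta-T>0$. Since $a_{p_n}\to 0$, there is an $N$ with $a_{p_n}<\epsilon$ for all $n\ge N$. For each such $n$, the bound $T_{n-1}\le T$ yields $T_{n-1}+a_{p_n}<T+\epsilon=\beta$, so the greedy step \emph{adds} $p_n$ to $A_n$. Hence $T_n-T_{n-1}=a_{p_n}$ for every $n\ge N$, giving $T\ge T_{N-1}+\sum_{n\ge N}a_{p_n}=\infty$, contradicting $T\le\beta$. Therefore $T=\beta$, and $R_A=\prod_{p\in A}\frac{p-1}{p+1}=e^{-\beta}=\alpha$. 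The crux is precisely this last step: ensuring the greedy procedure closes the gap to $\beta$, which is secured by the joint use of $a_p\to 0$ (so individual jumps are eventually small enough to fit) and $\sum_p a_p=\infty$ (so infinitely many jumps occur).
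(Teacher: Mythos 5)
Your proof is correct and is essentially the paper's own argument: both greedily select primes in increasing order so that the running product stays at least $\alpha$ (equivalently, your running sum of $a_p=\log\frac{p+1}{p-1}$ stays at most $\beta$), and both close the gap using exactly the two facts that $a_p\to 0$ and $\sum_p a_p$ diverges (the paper phrases the latter via Lemma~\ref{allp}). Your logarithmic reformulation and the explicit divergence contradiction are a clean, slightly tidier packaging of the same idea.
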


\begin{proof} Note first that $R_P=0$ and $R_\varnothing=1$. To prove the statement for the remainder of the values, let $\alpha\in(0,1)$. Then since %
$$\lim_{p\to\infty}R_{\{p\}}=\lim_{p\to\infty} \left(1-\frac{2}{p+1}\right)=1,$$
there is a minimal prime $q_1$ such that %
$$R_{\{q_1\}}=\left(1-\frac{2}{q_1+1}\right)>\alpha\qquad $$
i.e.,
$$
\frac{1}{\alpha}\cdot R_{\{q_1\}}=\frac{1}{\alpha}\left(1-\frac{2}{q_1+1}\right)>1.
$$
Similarly, for each $N\in\mathbb{N}$, we may continue in the same fashion, choosing $q_i>q_{i-1}$ (for $i=2\ldots N$) minimally, we have %
$$\frac{1}{\alpha}\cdot R_{\{q_1,q_2,\ldots,q_N\}}=\frac{1}{\alpha}\prod_{i=1}^N\left(1-\frac{2}{q_i+1}\right)>1.$$ %
Now consider %
$$\lim_{N\to\infty} \frac{1}{\alpha}\cdot R_{\{q_1,q_2,\ldots,q_N\}}=\frac{1}{\alpha}\prod_{i=1}^\infty\left(1-\frac{2}{q_i+1}\right),$$ %
where the $q_i$ are chosen as before. Denote $A=\{q_i\}_{i=1}^\infty.$ We know that %
$$\frac{1}{\alpha}\cdot R_A=\frac{1}{\alpha}\prod_{i=1}^\infty\left(1-\frac{2}{q_i+1}\right)\geq 1.$$

We claim that $R_A=\alpha$. To this end, let us suppose to the contrary that %
$$\frac{1}{\alpha}\cdot R_A=\frac{1}{\alpha}\prod_{i=1}^\infty\left(1-\frac{2}{q_i+1}\right)>1.$$ %
Applying Lemma \ref{allp}, we see that $P\backslash A$ is infinite (here $P$ is the set of all primes). As earlier, since %
$$\lim_{\substack{p\to\infty\\ p\in A\backslash P}}R_{\{p\}}=\lim_{p\to\infty} \left(1-\frac{2}{p+1}\right)=1,$$ %
there is a minimal prime $q\in A\backslash P$ such that %
$$\frac{1}{\alpha}\cdot R_A\cdot R_{\{q\}}=\frac{1}{\alpha}\left[\prod_{i=1}^\infty\left(1-\frac{2}{q_i+1}\right)\right]\cdot \left(1-\frac{2}{q+1}\right)>1.$$ %
Since $q$ is a prime and $q\notin A$, there is an $i\in\mathbb{N}$ with $q_i<q<q_{i+1}$. This contradicts that $q_{i+1}$ was a minimal choice. Hence %
$$\frac{1}{\alpha}\cdot R_A=\frac{1}{\alpha}\prod_{i=1}^\infty\left(1-\frac{2}{q_i+1}\right)=1,$$ %
and there is a set $A$ of primes such that $R_A=\alpha$.
\end{proof}

The following theorem gives asymptotic formulas for the mean value of $\lambda_A$ if certain condition on the density of $A$ in $P$ is assumed.

\begin{theorem}\label{R_A char} Suppose $A$ be a subset of primes with density
\begin{equation}\label{1.2}\sum_{\substack{p\le x \\ p\in A}}\frac{\log p}{p} = \frac{1-\kappa}{2}\log x + O (1)\end{equation} and $-1 \le \kappa \le 1$.

If $0 < \kappa \le 1$, then  we have $$\sum_{n \le x}\frac{\lambda_A(n)}{n} = c_{\kappa} (\log x)^\kappa + O(1)$$ and $$\sum_{n \le x} \lambda_A(n) = (1+o(1))c_{\kappa}\kappa x(\log x)^{\kappa -1},$$ where
\begin{equation}\label{4} c_{\kappa} = \frac{1}{\Gamma (\kappa +1)}\prod_p \left( 1-\frac{1}{p}\right)^\kappa \left( 1-\frac{\lambda_A(p)}{p} \right)^{-1} .
\end{equation}
In particular,
$$R_A=\lim_{x\to \infty} \frac{1}{x} \sum_{n \le x} \lambda_A(n) = \begin{cases} c_1=\prod_{p\in A} \left( \frac{p-1}{p+1} \right) & \mbox{ if $\kappa =1$,} \\ 0 & \mbox{ if $0 < \kappa < 1$.}\end{cases}
$$
Furthermore, $\mathcal{L}_A(s)$ has a pole at $s=1$ of order $\kappa$ with residue $c_\kappa \Gamma (\kappa +1)$, i.e., %
$$\mathcal{L}_A(s)=\frac{c_{\kappa}\Gamma (\kappa +1)}{(s-1)^{\kappa}} + \psi (s),\quad  \Re (s) > 1,$$
for some function $\psi (s)$ analytic on the region $\Re (s) \ge 1$. %
If $-1 \le \kappa < 0$, then $\mathcal{L}_A(s)$ has zero at $s=1$ of order $-\kappa$, i.e., %
$$\mathcal{L}_A(s)=\frac{\zeta (2s)}{c_{-\kappa}\Gamma (-\kappa +1)}(s-1)^{-\kappa}(1 + \varphi (s))$$ %
for some function $\varphi (s)$ analytic on the region $\Re (s)\ge 1$ and hence
$$\mathcal{L}_A(1)=\sum_{n=1}^\infty \frac{\lambda_A(n)}{n}=0$$
and %
$$R_A=\lim_{x\to \infty} \frac{1}{x} \sum_{n \le x} \lambda_A(n) = 0.$$
If $\kappa = 0$, then  $\mathcal{L}_A(s)$ has no pole nor zero at $s=1$. In particular, we have
$$\sum_{n=1}^\infty \frac{\lambda_A(n)}{n}=\alpha \neq 0$$
and
$$R_A=\lim_{x\to \infty} \frac{1}{x} \sum_{n \le x} \lambda_A(n) = 0.$$
\end{theorem}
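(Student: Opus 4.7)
The plan is to apply the Selberg--Delange method to $\mathcal{L}_A$. As a first step, I would combine the density hypothesis with Mertens' estimate $\sum_{p\le x}(\log p)/p=\log x+O(1)$ to obtain
$$\sum_{p\le x}\lambda_A(p)\frac{\log p}{p}=\kappa\log x+O(1),$$
and then deduce by Abel summation that the series $\sum_p(\lambda_A(p)-\kappa)/p$ converges, providing the analytic regularity at $s=1$ needed below.

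Next I would exhibit the factorisation $\mathcal{L}_A(s)=\zeta(s)^\kappa G_\kappa(s)$, where
$$G_\kappa(s):=\prod_{p\in A}(1-p^{-s})^\kappa(1+p^{-s})^{-1}\prod_{p\notin A}(1-p^{-s})^{\kappa-1},$$
using the principal branch of $\zeta(s)^\kappa$ on a slit neighbourhood of $s=1$. On taking logarithms of the Euler product, the $k\ge 2$ prime-power contributions converge absolutely for $\Re s>1/2$, while the $k=1$ contribution is exactly $\sum_p(\lambda_A(p)-\kappa)p^{-s}$, handled by the previous step. Hence $G_\kappa$ is analytic and non-vanishing on a neighbourhood of $s=1$ in the closed half-plane $\Re s\ge 1$, and direct computation at $s=1$ gives
$$G_\kappa(1)=\prod_p(1-p^{-1})^\kappa(1-\lambda_A(p)/p)^{-1}=c_\kappa\,\Gamma(\kappa+1).$$

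With the factorisation in hand, for $0<\kappa\le 1$ I would invoke the Selberg--Delange Tauberian theorem (Perron's formula plus a Hankel contour around the branch point at $s=1$) to obtain
$$\sum_{n\le x}\frac{\lambda_A(n)}{n}=c_\kappa(\log x)^\kappa+O(1),\qquad \sum_{n\le x}\lambda_A(n)=(1+o(1))c_\kappa\kappa\, x(\log x)^{\kappa-1},$$
while the pole expansion $\mathcal{L}_A(s)=c_\kappa\Gamma(\kappa+1)/(s-1)^\kappa+\psi(s)$ follows directly from the factorisation and the Laurent expansion of $\zeta$ at $s=1$. The case $\kappa=1$ reproduces $R_A=\prod_{p\in A}(p-1)/(p+1)$, while for $0<\kappa<1$ the exponent $\kappa-1$ is negative, forcing $R_A=0$. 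For $\kappa=0$ one has $\mathcal{L}_A=G_0$ analytic and non-zero at $s=1$, so $\sum_n\lambda_A(n)/n=G_0(1)\ne 0$; the hypothesis forces $\sum_{p\in A}p^{-1}$ to diverge, and Wintner's theorem (quoted just before the statement) then gives $R_A=0$.

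For $-1\le\kappa<0$ the same factorisation still works, but now $\zeta(s)^\kappa\sim(s-1)^{-\kappa}$ with exponent $-\kappa>0$, producing a zero of order $-\kappa$ at $s=1$ and in particular giving $\mathcal{L}_A(1)=0$ and $R_A=0$; the $\zeta(2s)$ appearing in the stated formula arises by rewriting $(1-p^{-s})/(1+p^{-s})=(1-p^{-s})^2/(1-p^{-2s})$ inside the Euler product and isolating the non-vanishing factor $\prod_p(1-p^{-2s})^{-1}=\zeta(2s)$. The main obstacle is Step~2: extracting enough analytic regularity of $G_\kappa$ at $s=1$ from the merely $O(1)$ density hypothesis requires the careful Abel-summation argument of Step~1, and it is this analytic control that drives the Tauberian step.
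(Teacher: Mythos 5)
Your skeleton---factor $\mathcal{L}_A(s)=\zeta(s)^{\kappa}G_{\kappa}(s)$, evaluate $G_{\kappa}(1)=c_{\kappa}\Gamma(\kappa+1)$, and use $\mathcal{L}_A(s)\mathcal{L}_{\overline{A}}(s)=\zeta(2s)$ to convert the positive-$\kappa$ case into the negative-$\kappa$ case---overlaps with the paper in its second half (the complement identity is exactly how the paper treats $-1\le\kappa<0$), and your reduction of $\log G_{\kappa}$ to $\sum_p(\lambda_A(p)-\kappa)p^{-s}$ plus an absolutely convergent tail is correct. The gap is in the Tauberian step. From \eqref{1.2} and Mertens you get $\sum_{p\le x}(\lambda_A(p)-\kappa)\frac{\log p}{p}=O(1)$, and Abel summation then gives convergence of $\sum_p(\lambda_A(p)-\kappa)p^{-1}$; but for a Dirichlet series, convergence at $s=1$ yields only analyticity in $\Re s>1$ and continuity at $s=1$ from within Stolz angles. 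It does \emph{not} give analytic continuation of $G_{\kappa}$ to any open neighbourhood of $s=1$, and generically no such continuation exists. The Selberg--Delange method as you invoke it---Perron's formula plus a Hankel contour around the branch point---requires deforming the contour to the \emph{left} of the line $\Re s=1$, hence requires $G_{\kappa}$ holomorphic with growth bounds in a region of the form $\sigma>1-c/\log(|t|+2)$. That regularity is not a consequence of the merely $O(1)$ density hypothesis, so the contour shift cannot be performed and neither asymptotic follows; in particular the strong error term in $\sum_{n\le x}\lambda_A(n)/n=c_{\kappa}(\log x)^{\kappa}+O(1)$ is out of reach by this route.

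The paper sidesteps this by staying elementary at the critical step: it applies Wirsing's theorem (quoted from Iwaniec--Kowalski) to $f(n)=\lambda_A(n)/n$, whose hypothesis $\sum_{n\le x}\Lambda(n)f(n)=\kappa\log x+O(1)$ is precisely what \eqref{1.2} plus Mertens supplies, obtaining the logarithmic mean value with $O(1)$ error directly and without any contour integration. The pole expansion of $\mathcal{L}_A$ at $s=1$ is then deduced \emph{afterwards} by Stieltjes integration of that partial-sum estimate (the reverse of your order of deduction), and the asymptotic for $\sum_{n\le x}\lambda_A(n)$ comes from a Wiener--Ikehara-type theorem, which needs only behaviour on the boundary line $\Re s=1$ rather than continuation past it. To repair your argument you would have to replace the Hankel contour by a boundary Tauberian theorem of Delange--Ikehara type (and then confront the fact that $\lambda_A(n)$ is not nonnegative), or else import an elementary mean-value theorem such as Wirsing's---which is what the paper does.
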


The proof of Theorem \ref{R_A char} will require the following result.

\begin{theorem}[Wirsing] Suppose $f$ is a completely multiplicative function which satisfies
\begin{equation}\label{1.1}\sum_{n \le x}\Lambda (n) f(n) = \kappa \log x +O (1) \end{equation}
and
\begin{equation}\label{2}\sum_{n \le x}\left|f(n)\right| \ll \log x
\end{equation}
with $0 \le  \kappa \le 1$ where $\Lambda (n)$ is the von Mangoldt function. Then we have
\begin{equation}\label{3}\sum_{n \le x}f(n) = c_f(\log x)^\kappa + O (1)
\end{equation}
where
\begin{equation}\label{3.1}c_f := \frac{1}{\Gamma (\kappa +1)}\prod_p \left( 1-\frac{1}{p}\right)^\kappa \left(\frac{1}{1-f(p)}\right) \end{equation}
where $\Gamma (\kappa )$ is the Gamma function.
\end{theorem}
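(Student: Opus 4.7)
The plan is to deploy Wirsing's elementary method, whose centerpiece is the convolution identity $f(n)\log n = \sum_{d\mid n}\Lambda(d)\,f(d)\,f(n/d)$, valid because $f$ is completely multiplicative and $\log = \Lambda \ast 1$. Setting $S(x) := \sum_{n\le x}f(n)$, I would sum this identity over $n\le x$, interchange the order of summation on the right, and apply partial summation on the left to arrive at
$$S(x)\log x - \int_1^x \frac{S(t)}{t}\,dt = \sum_{d\le x}\Lambda(d)f(d)\,S(x/d).$$
Now the hypothesis \eqref{1.1} is fed into the right-hand side via Abel summation: after the change of variables $u=x/t$ its main contribution becomes $\kappa\int_1^x S(u)/u\,du$, while the a priori bound $|S(x)| \le \sum_{n\le x}|f(n)| \ll \log x$ from \eqref{2} absorbs the remainder. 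Collecting terms yields the Wirsing integral equation
$$S(x)\log x = (\kappa+1)\int_1^x \frac{S(t)}{t}\,dt + O(\log x).$$

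Writing $V(x) := \int_1^x S(t)/t\,dt$, this becomes the first-order linear ODE $x V'(x)\log x = (\kappa+1)V(x) + O(\log x)$, and the substitution $y = \log x$, $W(y) = V(e^y)$ reduces it to $W'(y) = (\kappa+1)W(y)/y + O(1)$. Multiplying by the integrating factor $y^{-(\kappa+1)}$ and integrating gives $W(y) = \alpha y^{\kappa+1} + O(y)$ for a specific constant $\alpha$, hence $V(x) = \alpha(\log x)^{\kappa+1} + O(\log x)$. A second pass through the integral identity, bootstrapping this crude estimate against itself, then sharpens the bound to the claimed $S(x) = c_f(\log x)^{\kappa} + O(1)$ with $c_f = (\kappa+1)\alpha$.

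To identify $c_f$ with the explicit product \eqref{3.1}, I would pass to the Dirichlet series $F(s) = \sum_{n\ge 1}f(n)/n^s = \prod_p(1 - f(p)p^{-s})^{-1}$ and compare it with $\zeta(s)^\kappa = \prod_p(1-p^{-s})^{-\kappa}$. The hypothesis \eqref{1.1} forces $-F'(s)/F(s)$ to have a simple pole of residue $\kappa$ at $s=1$, so the product $F(s)\zeta(s)^{-\kappa}$ extends continuously to $s=1$ with value exactly $\prod_p(1-p^{-1})^{\kappa}(1-f(p))^{-1}$; the normalization by $\Gamma(\kappa+1)$ reflects the standard Tauberian correspondence between a singularity of the form $c\,\Gamma(\kappa+1)/(s-1)^{\kappa}$ and a partial sum of size $c(\log x)^{\kappa}$.

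The main obstacle is tightening the error from $V$ back to $S$ down to $O(1)$: the ODE alone determines $V(x)$ only to within an $O(\log x)$ additive error, which naively would transfer to something weaker than $O(1)$ on $S$ itself, and a careful bootstrapping argument leaning on \eqref{2} is needed to control the pointwise oscillation of $S$. A secondary delicacy is that the differential equation pins $\alpha$ down only up to matching data, so tying $c_f$ to the concrete Euler product genuinely uses the global information carried by $F(s)$ rather than just the local integral identity.
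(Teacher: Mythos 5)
First, note that the paper does not actually prove this theorem: its ``proof'' is a one-line citation to Theorem 1.1 on p.~27 of Iwaniec--Kowalski, with condition (1.89) there replaced by \eqref{2}. Your sketch is therefore an attempt at the real argument, and its skeleton is the classical Wirsing iteration, which is essentially the method behind the cited result. The identity $f(n)\log n=\sum_{d\mid n}\Lambda(d)f(d)f(n/d)$, the resulting relation $S(x)\log x-\int_1^x S(t)t^{-1}dt=\sum_{d\le x}\Lambda(d)f(d)S(x/d)$, and the conversion via \eqref{1.1} and the total-variation bound $\sum_{n\le x}|f(n)|\ll\log x$ into $S(x)\log x=(\kappa+1)\int_1^xS(t)t^{-1}dt+O(\log x)$ are all correct. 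The integrating-factor step then gives, for $0<\kappa\le1$, $V(x)=\alpha(\log x)^{\kappa+1}+O(\log x)$ with $\alpha=\lim y^{-(\kappa+1)}W(y)$ existing because the error $O(y^{-(\kappa+1)})$ is integrable at infinity. Your worry about a delicate bootstrap from $V$ back to $S$ is unfounded: substituting this estimate for $V$ directly back into $S(x)\log x=(\kappa+1)V(x)+O(\log x)$ already yields $S(x)=(\kappa+1)\alpha(\log x)^{\kappa}+O(1)$, so the analytic part of the theorem is complete at that point for $\kappa>0$.

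There are two genuine gaps. The more serious one is the identification of the constant, which is precisely the content of \eqref{3.1} and is the part you leave as an assertion. Under the normalization \eqref{2} the Dirichlet series $F(s)=\sum f(n)n^{-s}$ has its singularity at $s=0$, not $s=1$; the correct comparison is $F(s)\zeta(s+1)^{-\kappa}=\prod_p(1-p^{-s-1})^{\kappa}(1-f(p)p^{-s})^{-1}$, whose limit as $s\to0^+$ is $\prod_p(1-p^{-1})^{\kappa}(1-f(p))^{-1}$. As written, your comparison at $s=1$ would produce $\prod_p(1-p^{-1})^{\kappa}(1-f(p)/p)^{-1}$, which is not the stated $c_f$. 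Moreover, the continuity of this Euler product up to the boundary is not automatic: one must show $\sum_p\bigl(f(p)-\kappa p^{-1}\bigr)$ converges, which requires feeding \eqref{1.1} through partial summation a second time; this is exactly the ``global information'' you allude to but do not supply. The second gap is the endpoint $\kappa=0$, which the statement includes: there the integrating factor is $y^{-1}$ and $\int^y t^{-1}dt$ diverges, so your argument only gives $S(x)\ll\log\log x$ rather than $S(x)=c_f+O(1)$; a separate argument (or an appeal to the degenerate form of the cited theorem) is needed there.
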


\begin{proof} This can be found in Theorem 1.1 at P.27 of \cite{IK} by replacing condition (1.89) by \eqref{2}.
\end{proof}

\begin{proof}[Proof of Theorem \ref{R_A char}] Suppose first that $0 < \kappa \le 1$. We choose $f(n)= \frac{\lambda_A(n)}{n}$ in Wirsing Theorem. Since %
$$\sum_{n \le x}\frac{\Lambda (n)}{n} = \sum_{p \le x}\frac{\log p}{p} + O(1) = \log x + O(1),$$ %
so
\begin{eqnarray*} \sum_{n\le x}\frac{\Lambda (n)}{n}\lambda_A(n)& = & \sum_{p\le x}\frac{\log p}{p}\lambda_A(p) +O \left( \sum_{p^l \le x, l \ge 2}\frac{\log p}{p^l}\right) \\
& = & \sum_{p\le x}\frac{\log p}{p}\lambda_A(p) +O \left( \sum_{n \le x}\frac{\Lambda (n)}{n} - \sum_{p \le x}\frac{\log p}{p} \right) \\
& = & \sum_{p\le x}\frac{\log p}{p}\lambda_A(p) +O (1).
\end{eqnarray*}
On the other hand, from \eqref{1.2} we have
\begin{eqnarray*}
\sum_{p\le x}\frac{\log p}{p}\lambda_A(p) & = & \sum_{p \le x}\frac{\log p}{p} -2\sum_{\substack{p \le x \\ p \in A}}\frac{\log p}{p} \\
& = & \kappa \log x + O (1).
\end{eqnarray*}
Hence we have
$$\sum_{n \le x}\frac{\Lambda (n)}{n}\lambda_A(n) = \kappa \log x +O (1)$$
and condition  \eqref{1.1} is satisfied.

It then follows from \eqref{3} and \eqref{4} that %
$$\sum_{n \le x}\frac{\lambda_A(n)}{n} = c_{\kappa}(\log x)^\kappa + O(1).$$
From \eqref{1.2}, we have
\begin{eqnarray*}
\mathcal{L}_A(s+1)=      \sum_{n=1}^\infty \frac{\lambda_A(n)}{n^{s+1}} & = & \int_1^\infty y^{-s} d\sum_{n \le y} \frac{\lambda_A(n)}{n} \\
& = & \int_1^\infty y^{-s} d\left( c_{\kappa} (\log y)^\kappa + O(1)\right) \\
& = & c_{\kappa}\kappa \int_1^\infty \frac{(\log y)^{\kappa -1}}{y^{s+1}}dy + \int_1^\infty y^{-s}d O (1) \\
& = & c_{\kappa} \Gamma (\kappa +1)s^{-\kappa} + \psi(s)
\end{eqnarray*}
for $\Re (s) >0$ because %
$$\int_1^\infty \frac{(\log y)^{\kappa -1}}{y^{s+1}}dy = \Gamma (\kappa )s^{-\kappa}.$$
Here $\psi (s)$ is an analytic function on $\Re (s)\ge 0$.

Therefore, $\mathcal{L}_A(s)$ has a pole at $s=1$ of order $0 < \kappa \le 1$. Now from a generalization of the Wiener-Ikehara theorem (e.g. Theorem 7.7 of \cite{BD}), we have
$$\sum_{n \le x} \lambda_A(n) = (1+o(1)) c_{\kappa}\kappa x(\log x)^{\kappa -1}$$
and hence %
$$R_A=\lim_{x\to \infty} \frac{1}{x} \sum_{n \le x} \lambda_A(n) = \begin{cases} c_1 & \mbox{ if $\kappa =1$,} \\
0 & \mbox{ if $0 < \kappa < 1$.}
\end{cases}$$
However, %
$$c_1= \prod_p \left( 1-\frac{1}{p}\right) \left( 1-\frac{\lambda_A(p)}{p} \right)^{-1} =\prod_{p\in A} \left( \frac{1-p^{-1}}{1+p^{-1}} \right).$$

If $-1 \le \kappa < 0$,  we denote the complement of $A$ by $\overline{A}$. Then we have
\begin{eqnarray*}
\mathcal{L}_{\overline{A}}(s) & = & \sum_{n=1}^\infty \frac{\lambda_{\overline{A}}(n)}{n^s} =  \zeta (s) \prod_{p\not\in A} \left(\frac{1-p^{-s}}{1+p^{-s}}\right) \\
& = & \frac{\zeta (2s)}{\zeta (s)}\prod_{p\in A} \left(\frac{1+p^{-s}}{1-p^{-s}}\right)  =  \frac{\zeta (2s)}{\mathcal{L}_A(s)}
\end{eqnarray*}
for $\Re (s) > 1$. Hence, for $\Re(s) > 0$,  we have
\begin{equation}\label{7}\mathcal{L}_{\overline{A}}(s)\mathcal{L}_A(s) = \zeta(2s).
\end{equation}

From \eqref{1.2}, we have
$$\sum_{\substack{p\le x \\ p\not\in A}}\frac{\log p}{p}=\sum_{p\le x}\frac{\log p}{p} - \sum_{\substack{p\le x \\ p\in A}}\frac{\log p}{p} = \frac{1+\kappa}{2} \log x + O (1)$$
and
$$\sum_{n \le x}\frac{\Lambda (n)}{n}\lambda_{\overline{A}}(n) = - \kappa \log x + O (1).$$

We then apply the above  case to $\mathcal{L}_{\overline{A}}(s)$ and deduce that $\mathcal{L}_{\overline{A}}(s)$ has a pole at $s=1$ of order $-\kappa$, then in view of \eqref{7}, $\mathcal{L}_A(s)$ has a zero at $s=1$ of order $-\kappa$, i.e.,
$$\mathcal{L}_A(s)=\frac{\zeta (2s)}{c_{-\kappa}\Gamma (-\kappa +1)}(s-1)^{-\kappa}(1 + \varphi (s))$$ %
for some function $\varphi (s)$ analytic on the region $\Re (s)\ge 1$. In particular, we have
\begin{equation}\label{7.1}\mathcal{L}_A(1)=\sum_{n=1}^\infty \frac{\lambda_A(n)}{n}=0.\end{equation}
This completes the proof of Theorem \ref{R_A char}.
\end{proof}

Recall that Theorem \ref{allga} tells us that any $\alpha\in[0,1]$ is a mean value of a function in $\mathcal{F}(\{-1,1\})$. The functions in $\mathcal{F}(\{-1,1\})$ can be put into two natural classes: those with mean value $0$ and those with positive mean value.

Asymptotically, those functions with mean value zero are more interesting, and it is in this class which the Liouville $\lambda$--function resides, and in that which concerns the prime number theorem and the Riemann hypothesis. We consider an extended example of such functions in Section \ref{glp}. Before this consideration, we ask some questions about those functions $f\in\mathcal{F}(\{-1,1,\})$ with positive mean value.

%%%%%%%%%%%%%%%%%%%%%%%%%%%%%%%%%%%%%%%%%%%%%%
\section{One question twice}
%%%%%%%%%%%%%%%%%%%%%%%%%%%%%%%%%%%%%%%%%%%%%%

It is obvious that if $\alpha\notin \mathbb{Q}$, then $R_A\neq \alpha$ for any finite set $A\subset P$. We also know that if $A\subset P$ is finite, then $R_A\in\mathbb{Q}$.

\begin{question} Is there a converse to this; that is, for $\alpha\in\mathbb{Q}$ is there a finite subset $A$ of $P$, such that $R_A=\alpha$?
\end{question}

The above question can be posed in a more interesting fashion. Indeed, note that for any finite set of primes $A$, we have that %
$$R_A=\prod_{p\in A} \frac{p-1}{p+1} =\prod_{p\in A} \frac{\phi (p)}{\sigma (p)}=\frac{\phi(z)}{\sigma(z)}$$
where $z=\prod_{p\in A}p$, $\phi$ is Euler's totient function and $\sigma$ is the sum of divisors function. Alternatively, we may view the finite set of primes $A$ as determined by the square--free integer $z$. In fact, the function $f$ from the set of square--free integers to the set of finite subsets of primes, defined by %
$$f(z)=f(p_1p_2\cdots p_r)=\{p_1,p_2,\ldots,p_r\},\qquad (z=p_1p_2\cdots p_r)$$
is bijective, giving a one--to--one correspondence between these two sets.

In this terminology, we ask the question as:

\begin{question} Is the image of $\phi(z)/\sigma(z):\{\mbox{square--free integers}\}\to\mathbb{Q}\cap(0,1)$ a surjection?
\end{question}

That is, for every rational $q\in(0,1)$, is there a square--free integer $z$ such that $\frac{\phi(z)}{\sigma(z)}=q\ ?$ As a start, we have Theorem \ref{allga}, which gives a nice corollary.

\begin{corollary} If $S$ is the set of square--free integers, then
$$\left\{ x\in\mathbb{R}: x=\lim_{\substack{k\to\infty\\ (n_k)\subset S}}\frac{\phi(n_k)}{\sigma(n_k)}\right\}=[0,1].$$
\end{corollary}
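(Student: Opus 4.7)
The plan is to prove the two inclusions separately, relying on Theorem \ref{allga} for the nontrivial direction.

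For the inclusion ``$\subseteq$'', observe that for every square--free $n\in S$ one has $\phi(n)\le n\le \sigma(n)$, so $\phi(n)/\sigma(n)\in(0,1]$. Hence any limit of such ratios lies in $[0,1]$.

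For the inclusion ``$\supseteq$'', fix $\alpha\in[0,1]$. By Theorem \ref{allga}, there is a set of primes $A\subset P$ with $R_A=\alpha$. Enumerate $A=\{q_1<q_2<\cdots\}$ (finite or infinite; take $A=\varnothing$ to handle $\alpha=1$) and set
$$n_k:=\prod_{i=1}^{k}q_i,$$
with the convention that $n_k$ stabilizes at $\prod_{q\in A}q$ once $k$ exceeds $|A|$ in the finite case. Each $n_k\in S$, and since $\phi$ and $\sigma$ are multiplicative on coprime arguments,
$$\frac{\phi(n_k)}{\sigma(n_k)}=\prod_{i=1}^{k}\frac{q_i-1}{q_i+1}.$$
The content of the proof is then to check that this finite product tends to $R_A=\alpha$ as $k\to\infty$.

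The convergence has three cases matching formula \eqref{ra}. If $A$ is finite, the sequence is eventually constant, equal to $\prod_{p\in A}(p-1)/(p+1)=R_A$. If $A$ is infinite with $\sum_{p\in A}p^{-1}<\infty$, the partial products converge to the absolutely convergent infinite product $\prod_{p\in A}(p-1)/(p+1)=R_A>0$. If $A$ is infinite with $\sum_{p\in A}p^{-1}=\infty$, then $R_A=0$ by \eqref{ra}, and we must verify the partial products also tend to $0$; this is the only mildly delicate point, and it follows from
$$\log\prod_{i=1}^{k}\frac{q_i-1}{q_i+1}=\sum_{i=1}^{k}\log\!\left(1-\frac{2}{q_i+1}\right)\le -\sum_{i=1}^{k}\frac{2}{q_i+1},$$
which diverges to $-\infty$ by comparison with the divergent series $\sum_i q_i^{-1}$.

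The main obstacle is thus just this last case, and it is handled by the elementary inequality above; everything else reduces cleanly to Theorem \ref{allga} together with the product formula \eqref{ra} for $R_A$.
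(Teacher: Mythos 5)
Your proof is correct and follows essentially the same route as the paper: invoke Theorem \ref{allga} to get $A$ with $R_A=\alpha$, form the partial products $n_k=\prod_{i=1}^k q_i$, and use multiplicativity to identify $\phi(n_k)/\sigma(n_k)$ with the partial products defining $R_A$. The only difference is that you spell out the convergence of the partial products in the three cases of \eqref{ra} (including the divergent case via $\log(1-x)\le -x$), which the paper leaves implicit.
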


\begin{proof} Let $\alpha\in[0,1]$ and $A$ be a subset of primes for which $R_A=\alpha$. If $A$ is finite we are done, so suppose $A$ is infinite. Write %
$$A=\{a_1,a_2,a_3,\ldots\}$$ %
where $a_i<a_{i+1}$ for $i=1,2,3,\ldots$ and define $n_k=\prod_{i=1}^k a_i$. The sequence $(n_k)$ satisfies the needed limit.
\end{proof}

%%%%%%%%%%%%%%%%%%%%%%%%%%%%%%%%%%%%%%%%%%%%%%%%%%%%%%%%%%%%%%%%
\section{The functions $\lambda_p(n)$}\label{glp}
%%%%%%%%%%%%%%%%%%%%%%%%%%%%%%%%%%%%%%%%%%%%%%%%%%%%%%%%%%%%%%%%

We now turn our attention to those functions $\mathcal{F}(\{-1,1\})$ with mean value $0$; in particular, we wish to examine functions for which a sort of Riemann hypothesis holds: functions for which $\mathcal{L}_A(s)=\sum_{n\in\mathbb{N}}\frac{\lambda_A(n)}{n^s}$ has a large zero--free region; that is, functions for which $\sum_{n\leq x}\lambda_A(n)$ grows slowly.

To this end, let $p$ be a prime number. Recall that the Legendre symbol modulo $p$ is  defined as
$$ \left(\frac{q}{p}\right) =\begin{cases} 1 & \mbox{ if $q$ is a quadratic residue modulo $p$,} \\
-1 & \mbox{ if $q$ is a quadratic non-residue modulo $p$,}\\
0 & \mbox{ if $q\equiv 0 \pmod p$.} \end{cases}$$
Here $q$ is a quadratic residue modulo $p$ provided $q\equiv x^2 \pmod p$ for some $x\not\equiv 0 \pmod p$.

Define the function $\Omega_p(n)$ to be the number of prime factors, $q$, of $n$ with $\left(\frac{q}{p}\right)=-1$; that is, %
$$\Omega_p(n)=\#\left\{q:\mbox{$q$ is a prime, $q|n$, and $\left(\frac{q}{p}\right)=-1$}\right\}.$$

\begin{definition} The modified Liouville function for quadratic non-residues modulo $p$ is defined as
$$\lambda_p(n):=(-1)^{\Omega_p(n)}.$$
\end{definition}

Analogous to $\Omega (n)$, since $\Omega_p(n)$ counts primes with multiplicities, $\Omega_p(n)$ is completely additive, and so $\lambda_p(n)$ is completely multiplicative. This being the case, we may define $\lambda_p(n)$ uniquely by its values at primes.

\begin{lemma}\label{quad} The function $\lambda_p(n)$ is the unique completely multiplicative function defined by $\lambda_p(p)=1$, and for primes $q\neq p$ by $$\lambda_p(q)=\left(\frac{q}{p}\right).$$
\end{lemma}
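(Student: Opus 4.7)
The plan is to verify the lemma by evaluating $\lambda_p$ directly on primes using the definition $\lambda_p(n)=(-1)^{\Omega_p(n)}$, and then invoking the fact that a completely multiplicative function on $\mathbb{N}$ is uniquely determined by its values on primes.

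First, I would handle the value at $p$ itself. Since the only prime divisor of $p$ is $p$, and the Legendre symbol $\left(\frac{p}{p}\right)=0$ (so $p$ is neither a quadratic residue nor a quadratic non-residue modulo itself), the defining count $\Omega_p(p)=\#\{q \text{ prime}: q\mid p,\ \left(\tfrac{q}{p}\right)=-1\}$ is empty. Thus $\Omega_p(p)=0$ and $\lambda_p(p)=(-1)^0=1$, as claimed.

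Next, for a prime $q\neq p$, the only prime divisor of $q$ is $q$, so $\Omega_p(q)$ is either $0$ or $1$ according as $\left(\frac{q}{p}\right)=1$ or $\left(\frac{q}{p}\right)=-1$. In both cases a direct check gives $\lambda_p(q)=(-1)^{\Omega_p(q)}=\left(\frac{q}{p}\right)$. Finally, $\lambda_p$ was already observed (in the paragraph preceding the lemma) to be completely multiplicative because $\Omega_p$ is completely additive, and any completely multiplicative function is determined by its values on the primes. Hence $\lambda_p$ coincides with the completely multiplicative function prescribed by the stated values, and that function is unique.

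There is no real obstacle here: the whole content is a bookkeeping check at primes together with the standard uniqueness of a completely multiplicative function given its values on $P$. The only point worth flagging is the mild subtlety that $p$ itself contributes $0$ to $\Omega_p$ (rather than $1$), which is what forces $\lambda_p(p)=1$ and distinguishes $\lambda_p$ from the extension $\chi(n)=\bigl(\tfrac{n}{p}\bigr)$ of the Legendre symbol, which vanishes at multiples of $p$.
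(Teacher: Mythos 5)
Your proof is correct and follows essentially the same route as the paper: evaluate $\Omega_p$ at each prime, observe it is $0$ or $1$ according to the Legendre symbol, and conclude by complete multiplicativity. If anything, your treatment of the value at $q=p$ is cleaner than the paper's, which disposes of that case in a footnote asserting $\lambda_p(p)=\left(\frac{p}{p}\right)=1$, whereas you correctly note that $\left(\frac{p}{p}\right)=0$ and that the point is simply that $p$ contributes nothing to the count $\Omega_p(p)$.
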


\begin{proof} Let $q$ be a prime with $q|n$. Now $\Omega_p(q)=0$ or $1$ depending on whether $\left(\frac{q}{p}\right)=1$ or $-1$, respectively. If $\left(\frac{q}{p}\right)=1$, then $\Omega_p(q)=0$, and so $\lambda_p(q)=1$.

On the other hand, if $\left(\frac{q}{p}\right)=-1$, then $\Omega_p(q)=1$, and so $\lambda_p(q)=-1$. In either case, we have\footnote{Note that using the given definition $\lambda_p(p)=\left(\frac{p}{p}\right)=1.$} $$\lambda_p(q)=\left(\frac{q}{p}\right).$$ \end{proof}

Hence if $n=p^km$ with $p\nmid m$, then we have
\begin{equation}\label{def2} \lambda_p(p^km)=\left(\frac{m}{p}\right).\end{equation}

Similarly, we may define the function $\Omega_p'(n)$ to be the number of prime factors $q$ of $n$ with $\left(\frac{q}{p}\right)=1$; that is, %
$$\Omega_p'(n)=\#\left\{q:\mbox{$q$ is a prime, $q|n$, and $\left(\frac{q}{p}\right)=1$}\right\}.$$

Analogous to Lemma \ref{quad} we have the following lemma for $\lambda_p'(n)$ and theorem relating these two functions to the traditional Liouville $\lambda$-function.

\begin{lemma} The function $\lambda_p'(n)$ is the unique completely multiplicative function defined by $\lambda_p'(p)=1$ and for primes $q\neq p$, as %
$$\lambda_p'(q)=-\left(\frac{q}{p}\right).$$
\end{lemma}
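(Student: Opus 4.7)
The plan is to mirror the proof of Lemma \ref{quad} essentially verbatim, since the only change is that $\Omega_p'$ counts prime divisors $q$ with $(q/p)=1$ rather than $(q/p)=-1$, and this simply flips which case contributes the exponent of $-1$.

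First I would verify complete multiplicativity of $\lambda_p'$: since $\Omega_p'$ is defined by counting prime factors (with multiplicity) satisfying a fixed condition on $(q/p)$, it is completely additive exactly as $\Omega_p$ is, so $\lambda_p'(mn) = (-1)^{\Omega_p'(mn)} = (-1)^{\Omega_p'(m)+\Omega_p'(n)} = \lambda_p'(m)\lambda_p'(n)$. Consequently $\lambda_p'$ is determined uniquely by its values at the primes.

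Next I would compute those values. For $q = p$ we have $(p/p) = 0 \neq 1$, so $p$ is not counted by $\Omega_p'$, giving $\Omega_p'(p) = 0$ and hence $\lambda_p'(p) = 1$. For a prime $q \neq p$, split into cases: if $(q/p) = 1$ then $\Omega_p'(q) = 1$, so $\lambda_p'(q) = -1 = -(q/p)$; if $(q/p) = -1$ then $\Omega_p'(q) = 0$, so $\lambda_p'(q) = 1 = -(q/p)$. In either case $\lambda_p'(q) = -\left(\frac{q}{p}\right)$, which is the claimed formula.

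There is no real obstacle here — the statement is a routine sign bookkeeping exercise once one notices that switching from ``count non-residue prime divisors'' to ``count residue prime divisors'' swaps the roles of $+1$ and $-1$ at the prime values, and that the trivial character value $(p/p)=0$ still makes $p$ fall outside the count, preserving $\lambda_p'(p)=1$. The uniqueness clause is automatic from complete multiplicativity plus specifying the values on primes.
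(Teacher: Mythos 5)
Your proof is correct and is exactly the argument the paper intends: the paper omits a proof, presenting the lemma as ``analogous to Lemma \ref{quad},'' and your case analysis at the primes (together with the observation that $\left(\frac{p}{p}\right)=0$ keeps $p$ outside the count, so $\lambda_p'(p)=1$) is precisely that analogous argument. Nothing further is needed.
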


\begin{theorem}\label{llp} If $\lambda(n)$ is the standard Liouville $\lambda$--function, then %
$$\lambda(n)=(-1)^k\cdot\lambda_p(n)\cdot \lambda_p'(n)$$ %
where $p^k\| n$, i.e., $p^k|n$ and $p^{k+1}\nmid n$.
\end{theorem}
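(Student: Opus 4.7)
The plan is to reduce the identity to a straightforward additivity statement for the exponent functions $\Omega$, $\Omega_p$, and $\Omega_p'$, then exponentiate. Since all three functions $\lambda$, $\lambda_p$, $\lambda_p'$ are completely multiplicative and the factor $(-1)^k = (-1)^{v_p(n)}$ is also completely multiplicative in $n$, both sides of the claimed identity are completely multiplicative, so it suffices to verify the identity at prime powers. Alternatively, one can just write $n = p^k m$ with $\gcd(m,p) = 1$ and argue directly on $n$; I will take this direct route, which is the cleaner bookkeeping.

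First I would write $n = p^k m$ with $p \nmid m$. The key observation is that for any prime $q \ne p$ dividing $m$, the Legendre symbol $\left(\tfrac{q}{p}\right)$ is either $+1$ or $-1$, so $q$ is counted in exactly one of $\Omega_p(m)$, $\Omega_p'(m)$. Counting with multiplicity (which, despite the set-builder notation in the paper, is the convention that makes $\Omega_p$ completely additive, as the text emphasizes) this yields
\begin{equation*}
\Omega(m) = \Omega_p(m) + \Omega_p'(m).
\end{equation*}
Next, since $\left(\tfrac{p}{p}\right) = 0$, the prime $p$ itself is counted in neither $\Omega_p$ nor $\Omega_p'$, so $\Omega_p(n) = \Omega_p(m)$ and $\Omega_p'(n) = \Omega_p'(m)$, while of course $\Omega(n) = k + \Omega(m)$. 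Combining,
\begin{equation*}
\Omega(n) = k + \Omega_p(n) + \Omega_p'(n).
\end{equation*}
Raising $-1$ to this power gives
\begin{equation*}
\lambda(n) = (-1)^{\Omega(n)} = (-1)^k \cdot (-1)^{\Omega_p(n)} \cdot (-1)^{\Omega_p'(n)} = (-1)^k \lambda_p(n)\lambda_p'(n),
\end{equation*}
which is exactly the claimed identity.

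There is no real obstacle here; the only mild point to be careful about is the convention that $\Omega_p$ and $\Omega_p'$ count with multiplicity (so that the complete additivity asserted in the paper actually holds), and the clean separation $\Omega(m) = \Omega_p(m) + \Omega_p'(m)$ for $m$ coprime to $p$ that uses that every prime $\ne p$ is either a QR or a QNR mod $p$. Everything else is just arithmetic of exponents.
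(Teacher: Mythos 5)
Your proof is correct and rests on the same underlying facts as the paper's: that every prime $q\neq p$ contributes to exactly one of $\Omega_p$, $\Omega_p'$ while $p$ contributes only to the $(-1)^k$ factor. The paper phrases this multiplicatively (reduce to primes by complete multiplicativity and check $n=p$ and $n=q\neq p$ separately), whereas you phrase it additively via $\Omega(n)=k+\Omega_p(n)+\Omega_p'(n)$ and exponentiate; these are the same argument in two notations, so there is nothing further to reconcile.
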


\begin{proof} It is clear that the theorem is true for $n=1$. Since all functions involved are completely multiplicative, it suffices to show the equivalence for all primes. Note that $\lambda(q)=-1$ for any prime $q$. Now if $n=p$, then $k=1$ and %
$$(-1)^1\cdot\lambda_p(p)\cdot \lambda_p'(p)=(-1)\cdot (1)\cdot (1)=-1=\lambda(p).$$
If $n=q\neq p$, then
$$(-1)^0\cdot\lambda_p(q)\cdot \lambda_p'(q)=\left(\frac{q}{p}\right)\cdot\left(-\left(\frac{q}{p}\right)\right)=-\left(\frac{q^2}{p}\right)=-1=\lambda(q),$$
and so the theorem is proved.
\end{proof}

%%%%%%%%%%%%%%%%%%%%%%%%%%%%%%%%%%%%%%%%%%%%%%%%%%%%%%%%%%%%%%%%
%\section{The function $L_p(n)$}
%%%%%%%%%%%%%%%%%%%%%%%%%%%%%%%%%%%%%%%%%%%%%%%%%%%%%%%%%%%%%%%%

To mirror the relationship between $L$ and $\lambda$, denote by $L_p(n)$, the summatory function of $\lambda_p(n)$; that is, define $$L_p(n):=\sum_{k=1}^n \lambda_p(n).$$ It is quite immediate that $L_p(n)$ is not positive\footnote{For the traditional $L(n)$, it was conjectured by P\'olya that $L(n)\geq0$ for all $n$, though this was proven to be a non-trivial statement and ultimately false (See Haselgrove \cite{Has1}).} for all $n$ and $p$. To find an example we need only look at the first few primes. For $p=5$ and $n=3$, we have $$L_5(3)=\lambda_5(1)+\lambda_5(2)+\lambda_5(3)=1-1-1=-1<0.$$ Indeed, the next few theorems are sufficient to show that there is a positive proportion (at least $1/2$) of the primes for which $L_p(n)<0$ for some $n\in\mathbb{N}$.

\begin{theorem}\label{cor1} Let
$$n=a_0+a_1p+a_2p^2+\ldots +a_kp^k$$
be the base $p$ expansion of $n$, where $a_j \in \{ 0,1,2,\ldots ,p-1\}$. Then we have
\begin{equation}\label{2g}L_p(n):=\sum_{l=1}^n\lambda_p(l) = \sum_{l=1}^{a_0}\lambda_p(l)+\sum_{l=1}^{a_1}\lambda_p(l)+\ldots +\sum_{l=1}^{a_k}\lambda_p(l).\end{equation}
Here the sum over $l$ is regarded as empty if $a_j=0$.
\end{theorem}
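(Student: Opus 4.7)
The plan is to establish the identity by induction on $k$, the number of digits in the base-$p$ expansion, using a one-step recurrence obtained by peeling off the lowest digit $a_0$.

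First I would exploit the two structural facts about $\lambda_p$ proved in Lemma~\ref{quad} and \eqref{def2}: on integers prime to $p$ the function $\lambda_p$ coincides with the Legendre symbol $(\cdot/p)$, and $\lambda_p(p)=1$ implies the dilation invariance $\lambda_p(pl)=\lambda_p(l)$. The second fact lets me split the sum into multiples of $p$ and non-multiples, giving the recursion
\begin{equation*}
L_p(n)\;=\;\sum_{\substack{l\le n\\ p\nmid l}}\lambda_p(l)\;+\;\sum_{\substack{l\le n\\ p\mid l}}\lambda_p(l)\;=\;\sum_{l=1}^{n}\left(\tfrac{l}{p}\right)\;+\;L_p\!\left(\lfloor n/p\rfloor\right),
\end{equation*}
where in the first sum I have enlarged the range to all $l\le n$ because $(l/p)=0$ when $p\mid l$.

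Next I would evaluate $\sum_{l=1}^{n}(l/p)$ by grouping into complete blocks modulo $p$. Writing $n=pm+a_0$ with $m=\lfloor n/p\rfloor$ and $0\le a_0<p$, each block $\{jp+1,\dots,(j+1)p\}$ contributes $\sum_{r=1}^{p}(r/p)=0$ by the standard orthogonality relation for the Legendre symbol on a complete residue system. Only the final incomplete block of length $a_0$ survives, and since $1\le r\le a_0\le p-1$ forces $p\nmid r$, we have $(r/p)=\lambda_p(r)$. Thus
\begin{equation*}
\sum_{l=1}^{n}\left(\tfrac{l}{p}\right)\;=\;\sum_{r=1}^{a_0}\lambda_p(r),
\end{equation*}
and the recursion becomes $L_p(n)=\sum_{r=1}^{a_0}\lambda_p(r)+L_p(\lfloor n/p\rfloor)$.

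Finally I would induct on $k$. The base case $k=0$ (so $n=a_0<p$) is tautological. For the inductive step, $\lfloor n/p\rfloor = a_1+a_2p+\cdots+a_kp^{k-1}$ has base-$p$ digits $a_1,\dots,a_k$, so the inductive hypothesis applied to it unfolds $L_p(\lfloor n/p\rfloor)$ into the remaining partial sums $\sum_{l=1}^{a_j}\lambda_p(l)$ for $j=1,\dots,k$, producing \eqref{2g}. I expect the only real content to be the orthogonality-plus-dilation-invariance step in the middle; the induction itself is mechanical. The minor bookkeeping obstacle is the boundary case $a_0=0$, handled by the convention that an empty partial sum contributes nothing, which is consistent with the telescoping.
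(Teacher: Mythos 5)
Your proof is correct and rests on the same two ingredients as the paper's own argument: the dilation invariance $\lambda_p(pl)=\lambda_p(l)$ forced by $\lambda_p(p)=1$, and the vanishing of the Legendre symbol summed over any complete block of $p$ consecutive integers. The only difference is organizational: you peel off one digit at a time and induct, whereas the paper proves the more general Theorem~\ref{thm 2.2} (for an arbitrary non-principal character mod $p$) by decomposing the whole sum at once according to the exact power of $p$ dividing each term --- your induction simply unrolls that decomposition.
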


Instead of giving a proof of Theorem \ref{cor1} in this specific form, we will prove a more general result for which Theorem \ref{cor1} is a direct corollary. To this end, let $\chi$ be a non-principal Dirichlet character modulo $p$ and for any prime $q$ let
\begin{equation}\label{f}
f(q):=\begin{cases} 1 & \mbox{ if $p=q$,} \\ \chi (q) & \mbox{ if $p\neq q$.} \end{cases}
\end{equation}
We extend $f$ to be a completely multiplicative function and get
\begin{equation}
\label{1}
f(p^lm)=\chi (m)
\end{equation}
for $l\ge 0$ and $p\nmid m$. \begin{theorem}
\label{thm 2.2}
Let $N(n,l)$ be the number of digits $l$ in the base $p$ expansion of $n$.
Then
\[
\sum_{j=1}^n f(j) =\sum_{l=0}^{p-1} N(n,l)\left(\sum_{m\le l}\chi (m) \right).
\]
\end{theorem}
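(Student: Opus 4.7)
The plan is to prove the identity by induction on the number of base-$p$ digits of $n$, using the fact that $\chi$ is non-principal (so $\sum_{r=1}^{p-1}\chi(r)=0$) to collapse a full residue system mod $p$. Write $n = n' p + a_0$ with $0 \le a_0 \le p-1$, so that $n'$ has one fewer digit than $n$ (and its digits are $a_1,\dots,a_k$). The target reduces to the single recursion
\[
\sum_{j=1}^n f(j) \;=\; \sum_{j=1}^{n'} f(j) \;+\; \sum_{m\le a_0}\chi(m),
\]
after which iterating down to $n=0$ and regrouping by digit value gives the stated formula.

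To get this recursion I would split $\sum_{j=1}^n f(j)$ into residue classes modulo $p$. For $j\equiv 0\pmod p$, write $j=ps$; complete multiplicativity together with $f(p)=1$ yields $f(ps)=f(s)$, and the contribution is exactly $\sum_{s=1}^{n'}f(s)$. For each $r\in\{1,\dots,p-1\}$, every $j\le n$ with $j\equiv r\pmod p$ satisfies $p\nmid j$, hence $f(j)=\chi(j)=\chi(r)$; the count of such $j$ is $n'+1$ if $r\le a_0$ and $n'$ otherwise. Summing over $r$,
\[
\sum_{r=1}^{p-1}(\text{count})\,\chi(r) \;=\; n'\sum_{r=1}^{p-1}\chi(r)\;+\;\sum_{r=1}^{a_0}\chi(r) \;=\; \sum_{m\le a_0}\chi(m),
\]
because the first sum vanishes by non-principality of $\chi$. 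This establishes the recursion.

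Now I would induct on the number of digits of $n$. The base case $n=0$ is the empty identity $0=0$ (and the case $n<p$ follows at once since then $f(j)=\chi(j)$ for $j\le n$). For the inductive step, apply the recursion and the inductive hypothesis on $n'$, observing $N(n,l)=N(n',l)+\mathbf 1_{\{a_0=l\}}$, so
\[
\sum_{j=1}^n f(j)=\sum_{l=0}^{p-1}N(n',l)\sum_{m\le l}\chi(m)+\sum_{m\le a_0}\chi(m)=\sum_{l=0}^{p-1}N(n,l)\sum_{m\le l}\chi(m).
\]

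There is no real obstacle; the only thing to be careful about is the bookkeeping when $a_0=0$ (the inner sum $\sum_{m\le 0}\chi(m)$ is empty, consistent with the convention in the statement of Theorem \ref{cor1}) and that the leading-digit convention for $N(n,l)$ is compatible with the recursion $N(n,l)=N(n',l)+\mathbf 1_{\{a_0=l\}}$. Once those conventions are fixed, the induction closes immediately, and Theorem \ref{cor1} follows by taking $\chi$ to be the Legendre symbol modulo $p$ (so that $f=\lambda_p$ via \eqref{def2}).
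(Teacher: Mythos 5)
Your proof is correct, and it uses the same two key ingredients as the paper's: $f(pj)=f(j)$ (from $f(p)=1$ and complete multiplicativity) and the vanishing of $\chi$ over a complete residue system. The only difference is packaging — you peel off the last digit and induct, whereas the paper unrolls the same decomposition in one shot by summing over the exact power $p^l\,\|\,j$ — so this is essentially the paper's argument.
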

\begin{proof}
We write the base $p$ expansion of $n$ as
\begin{equation}
\label{8}
n=a_0+a_1p+a_2p^2+\ldots + a_kp^k
\end{equation}
where $0 \le a_j \le p-1$.
We then observe that, by writing $j=p^lm$ with $p\nmid m$,
\[
\sum_{j=1}^n f(j) =  \sum_{l=0}^k\sum_{\substack{j=1 \\ p^l\| j}}^{n} f(j) =  \sum_{l=0}^k \sum_{\substack{m \le n/p^l\\ (m,p)=1}} f(p^lm).
\]
For simplicity, we write \[
A:=a_0+a_1p+\ldots +a_lp^l \hspace{5mm} \mbox{ and } \hspace{5mm} B:=a_{l+1}+a_{l+2}p+\ldots +a_kp^{k-l-1}
\]
so that $n=A+Bp^{l+1}$ in \eqref{8}.
It now follows from \eqref{1} and \eqref{8} that
\[
\sum_{j=1}^n f(j) = \sum_{l=0}^k \sum_{\substack{m \le n/p^l\\ (m,p)=1}} \chi (m)= \sum_{l=0}^k \sum_{m \le A/p^l+Bp} \chi (m) =\sum_{l=0}^k \sum_{m \le A/p^l} \chi (m) \]
because $\chi (p)=0$ and $\sum_{m=a+1}^{a+p} \chi (m) =0$ for any $a$. Now since \[
a_l \le A/p^l=(a_0+a_1p+\ldots +a_lp^l)/p^l < a_l +1
\]
so we have
\[
\sum_{j=1}^n f(j) = \sum_{l=0}^k \sum_{m \le a_l} \chi (m) = \sum_{l=0}^{p-1} N(n,l)\left(\sum_{m\le l}\chi (m) \right). \]
This proves the theorem.
\end{proof}

In this language, Theorem \ref{cor1} can be stated as follows.

\begin{corollary}\label{theorem 2.2} If $N(n,l)$ is the number of digits $l$ in the base $p$ expansion of $n$, then
\begin{equation}\label{*} L_p(n)=\sum_{j=1}^n \lambda_p(j) =\sum_{l=0}^{p-1} N(n,l)\left(\sum_{m\le l}\left(\frac{m}{p}\right) \right). \end{equation} \end{corollary}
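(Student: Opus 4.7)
The plan is to recognize the corollary as the specialization of Theorem \ref{thm 2.2} in which the non-principal Dirichlet character $\chi$ modulo $p$ is taken to be the quadratic character, namely the Legendre symbol $\chi(m)=\left(\frac{m}{p}\right)$. For any odd prime $p$ this is a genuine non-principal Dirichlet character modulo $p$: it is completely multiplicative, periodic modulo $p$, vanishes precisely on multiples of $p$, and takes the value $-1$ on any quadratic non-residue (of which there are $(p-1)/2$), so it is not identically $1$ on the units.

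With this choice of $\chi$, the completely multiplicative function $f$ defined in \eqref{f} satisfies $f(p)=1$ and $f(q)=\left(\frac{q}{p}\right)$ for every prime $q\neq p$. By Lemma \ref{quad}, $\lambda_p$ is the \emph{unique} completely multiplicative function with these values at primes, so $f\equiv\lambda_p$ on all of $\mathbb{N}$.

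Feeding $f=\lambda_p$ and $\chi(m)=\left(\frac{m}{p}\right)$ into the conclusion of Theorem \ref{thm 2.2} yields
\[
L_p(n)=\sum_{j=1}^n\lambda_p(j)=\sum_{l=0}^{p-1}N(n,l)\left(\sum_{m\le l}\left(\frac{m}{p}\right)\right),
\]
which is exactly \eqref{*}. There is no genuine obstacle here since all of the combinatorial content — the base-$p$ digit decomposition and the telescoping that uses $\sum_{m=a+1}^{a+p}\chi(m)=0$ — has already been carried out in the proof of Theorem \ref{thm 2.2}; the only step specific to the corollary is the identification of $\lambda_p$ with the function $f$ attached to the Legendre symbol, which is immediate from Lemma \ref{quad}.
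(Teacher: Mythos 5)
Your proof is correct and matches the paper's intended argument exactly: the paper presents this corollary as the specialization of Theorem \ref{thm 2.2} to the Legendre symbol, with the identification $f=\lambda_p$ coming from Lemma \ref{quad} and the complete multiplicativity of both functions. Nothing further is needed.
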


As an application of this theorem consider $p=3$.

\begin{app}\label{cor2} The value of $L_3(n)$ is equal to the number of 1's in the base 3 expansion of $n$. \end{app}

\begin{proof}
Since $\left(\frac{1}{3}\right)=1$ and $\left(\frac{1}{3}\right)+\left(\frac{2}{3}\right)=0$, so if $n=a_0+a_13+a_23^2+\ldots +a_k3^k$
is the base $3$ expansion of $n$, then the right-hand side of \eqref{2g} (or equivalently, the right-hand side of \eqref{*}) is equal to $D_3(n)$. The result then follows from Theorem \ref{cor1} (or equivalently Corollary \ref{theorem 2.2}).
\end{proof}

Note that $L_3(n)=k$ for the first time when $n=3^0+3^1+3^2+\ldots + 3^k$ and is never negative. This is in stark contrast to the traditional $L(n)$, which is negative more often than not. Indeed, we may classify all $p$ for which $L_p(n)\geq 0$ for all $n\in\mathbb{N}$.

\begin{theorem}\label{thm3} The function $L_p(n)\geq 0$ for all $n$ exactly for those odd primes $p$ for which
$$\left(\frac{1}{p}\right)+\left(\frac{2}{p}\right)+\ldots +\left(\frac{k}{p}\right) \ge 0$$
for all $1\le k \le p$. \label{theorem3}
\end{theorem}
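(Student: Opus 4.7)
The plan is to derive the theorem directly from Corollary \ref{theorem 2.2}, which expresses $L_p(n)$ as a nonnegative-integer combination of the partial sums
\[
S_l := \sum_{m=1}^{l}\left(\frac{m}{p}\right),\qquad 0\le l\le p-1,
\]
with coefficients $N(n,l)$. The whole content of the theorem is that this linear combination is nonnegative for every $n$ precisely when every $S_l$ in the combination is nonnegative.

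For the ``if'' direction, I would simply observe that if $S_k\ge 0$ for every $1\le k\le p$, then in particular $S_l\ge 0$ for every $0\le l\le p-1$ (the case $l=0$ being the empty sum). Since each $N(n,l)\ge 0$, the identity
\[
L_p(n)=\sum_{l=0}^{p-1}N(n,l)\,S_l
\]
from Corollary \ref{theorem 2.2} immediately gives $L_p(n)\ge 0$ for all $n$.

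For the ``only if'' direction, I would argue by contrapositive. Suppose $S_k<0$ for some $1\le k\le p$. Since $\sum_{m=1}^{p}\left(\frac{m}{p}\right)=0$, we have $S_p=0$, so necessarily $1\le k\le p-1$. Now take $n=k$: its base-$p$ expansion is a single digit $a_0=k$, so $N(k,k)=1$ and $N(k,l)=0$ for $l\ne k$. Then Corollary \ref{theorem 2.2} yields
\[
L_p(k)=N(k,k)\,S_k=S_k<0,
\]
exhibiting an $n$ with $L_p(n)<0$ and completing the contrapositive.

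There is essentially no obstacle here: the substantive work was done in proving Theorem \ref{thm 2.2} and its specialization Corollary \ref{theorem 2.2}. The only minor points to keep clean are (i) noting that $S_p=0$ so the extra index $k=p$ in the hypothesis is automatic, and (ii) checking that a single-digit $n=k$ with $1\le k\le p-1$ isolates exactly the coefficient $S_k$ in the sum, which is what lets a single bad partial sum force a negative value of $L_p$.
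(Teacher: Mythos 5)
Your proposal is correct and follows essentially the same route as the paper: both reduce the claim to the digit decomposition of $L_p(n)$ (Theorem \ref{cor1}, equivalently Corollary \ref{theorem 2.2}) together with the observation that each digit contributes a partial character sum $\sum_{m\le a_j}\left(\frac{m}{p}\right)$. The paper compresses both directions into ``the result then follows,'' whereas you make explicit the two details it omits --- that $S_p=0$ makes the index $k=p$ vacuous, and that a single-digit $n=k$ isolates $S_k$ for the converse --- which is a welcome clarification rather than a deviation.
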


\begin{proof} We first observe from \eqref{def2} that if $0 \le r <p$, then %
$$\sum_{l=1}^r \lambda_p(l)=\sum_{l=1}^r \left(\frac{l}{p}\right).$$
From theorem \ref{cor1},
\begin{eqnarray*}
      \sum_{l=1}^n \lambda_p(l)
& = & \sum_{l=1}^{a_0}\lambda_p(l)+\sum_{l=1}^{a_1}\lambda_p(l)+\ldots
      +\sum_{l=1}^{a_k}\lambda_p(l) \\
& = & \sum_{l=1}^{a_0}\left(\frac{l}{p}\right)+\sum_{l=1}^{a_1}\left(\frac{l}{p}\right)+\ldots
      +\sum_{l=1}^{a_k}\left(\frac{l}{p}\right)
\end{eqnarray*}
because all $a_j$ are between $0$ and $p-1$. The result then follows.
\end{proof}

\begin{corollary} For $n\in\mathbb{N}$, we have
$$ 0 \leq L_3(n) \leq [ \log_3n] +1.$$
\end{corollary}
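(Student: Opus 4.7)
The plan is to read this off directly from Application \ref{cor2}, which asserts that $L_3(n)$ equals the number of $1$'s appearing in the base $3$ expansion of $n$. Call this quantity $D_3(n)$.

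For the lower bound, $D_3(n)$ is a count of digits, hence a nonnegative integer, so $L_3(n) = D_3(n) \geq 0$ is immediate.

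For the upper bound, write $n$ in base $3$ as $n = a_0 + a_1 \cdot 3 + \cdots + a_k \cdot 3^k$ with $a_k \neq 0$. Since $3^k \leq n < 3^{k+1}$, we have $k = [\log_3 n]$, so the total number of base-$3$ digits of $n$ is $k+1 = [\log_3 n] + 1$. The number of digits equal to $1$ is obviously at most the total number of digits, i.e.
\[
L_3(n) = D_3(n) \leq k+1 = [\log_3 n] + 1,
\]
which completes the argument. There is no real obstacle here; the content is entirely in Application \ref{cor2}, and the corollary is just the observation that a count of $1$-digits is bounded below by $0$ and above by the total digit length.
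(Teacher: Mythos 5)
Your proof is correct and follows essentially the same route as the paper: both read the result off from Application \ref{cor2} together with the observation that the number of $1$'s among the base-$3$ digits of $n$ is at most the total digit count $[\log_3 n]+1$. The only cosmetic difference is that the paper also cites Theorem \ref{thm3} for the nonnegativity, whereas you obtain it directly from the digit-count interpretation, which is equally valid.
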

\begin{proof}
This follows from Theorem \ref{thm3}, Application \ref{cor2}, and the fact that the number of 1's in the base three expansion of $n$ is $\le [\log_3n]+1$.
\end{proof}

As a further example, let $p=5$.

\begin{corollary}\label{cor5} The value of $L_5(n)$ is equal to the number of 1's in the base 5 expansion of $n$ minus the number of 3's in the base 5 expansion of $n$.
Also for $n\ge 1$, %
$$ \left| L_5(n)\right|  \le [\log_5n]+1. $$
\end{corollary}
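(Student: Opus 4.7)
The plan is to apply Corollary \ref{theorem 2.2} directly with $p=5$, which reduces the problem to computing the partial sums of the Legendre symbol $\left(\frac{\cdot}{5}\right)$ up through each residue class.

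First I would record the values of $\left(\frac{m}{5}\right)$ for $m=1,2,3,4$: since the quadratic residues mod $5$ are $\{1,4\}$, we have $\left(\frac{1}{5}\right)=1$, $\left(\frac{2}{5}\right)=-1$, $\left(\frac{3}{5}\right)=-1$, $\left(\frac{4}{5}\right)=1$. Tabulating the inner sums that appear in \eqref{*}:
\[
\sum_{m\le 0}\left(\tfrac{m}{5}\right)=0,\quad \sum_{m\le 1}=1,\quad \sum_{m\le 2}=0,\quad \sum_{m\le 3}=-1,\quad \sum_{m\le 4}=0.
\]
Only the $l=1$ and $l=3$ terms contribute, and they contribute $+1$ and $-1$ respectively. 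So plugging into Corollary \ref{theorem 2.2} gives $L_5(n)=N(n,1)-N(n,3)$, which is exactly the first assertion.

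For the absolute value bound, I would simply observe
\[
|L_5(n)|=|N(n,1)-N(n,3)|\le N(n,1)+N(n,3)\le \sum_{l=0}^{4}N(n,l),
\]
and the last sum is the total number of base-$5$ digits of $n$, which is $[\log_5 n]+1$.

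There is no real obstacle here; the content is entirely in Corollary \ref{theorem 2.2}. The only thing to be careful about is getting the Legendre symbol values right (and noting that the $l=0$ and $l=2,4$ contributions vanish), so that the formula collapses to the clean difference $N(n,1)-N(n,3)$.
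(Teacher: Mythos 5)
Your proof is correct and follows exactly the route the paper intends: the paper leaves this corollary unproved, but its proof of the analogous $p=3$ case (Application \ref{cor2} together with the digit-count bound) is precisely the computation you carry out, namely applying Corollary \ref{theorem 2.2} with the Legendre symbol partial sums $1,0,-1,0$ to get $L_5(n)=N(n,1)-N(n,3)$ and then bounding by the total number of base-$5$ digits.
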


Recall from above, that $L_3(n)$ is always nonnegative, but $L_5(n)$ isn't. Also $L_5(n)=k$ for the first time when $n=5^0+5^1+5^2+\ldots + 5^k$ and $L_5(n)=-k$ for the first time when $n=3\cdot 5^0+3\cdot 5^1+3\cdot 5^2+\ldots + 3\cdot 5^k$.

\begin{remark}\label{remp}The reason for specific $p$ values in the proceeding two corollaries is that, in general, it's not always the case that $|L_p(n)|\le [\log_pn]+1$. 
\end{remark}

We now return to our classification of primes for which $L_p(n)\geq 0$ for all $n\geq 1$.

\begin{definition}  Denote by $\mathcal{L}^+$, the set of primes $p$ for which $L_p(n)\geq 0$ for all $n\in\mathbb{N}$.
\end{definition}

 We have found, by computation, that the first few values in $\mathcal{L}^+$ are
$$\mathcal{L}^+=\{3,7,11,23,31,47,59,71,79,83,103,131,151,167,191,199,239,251\ldots\}.$$
By inspection, $\mathcal{L}^+$ doesn't seem to contain any primes $p$, with $p\equiv 1\pmod 4$. This is not a coincidence, as demonstrated by the following theorem.

\begin{theorem} If $p\in \mathcal{L}^+$, then $p\equiv 3 \pmod 4$.
\end{theorem}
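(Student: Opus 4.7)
The plan is to combine the classification of $\mathcal{L}^+$ from Theorem \ref{thm3} with the even/odd symmetry of the Legendre symbol. Set
\[
S(k) := \sum_{l=1}^{k}\left(\frac{l}{p}\right).
\]
By Theorem \ref{thm3}, the condition $p\in\mathcal{L}^+$ is equivalent to $S(k)\ge 0$ for every $1\le k\le p$. The goal, then, is to show that if $p\equiv 1\pmod 4$, some partial sum $S(k)$ is strictly negative.

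The key input is that $\left(\frac{-1}{p}\right)=1$ precisely when $p\equiv 1\pmod 4$. Using this together with the classical identity $S(p-1)=0$ (equal numbers of quadratic residues and non-residues mod $p$), substitute $m=p-l$ in the tail of $S(p-1)$:
\[
S(p-1)-S(k)=\sum_{l=k+1}^{p-1}\left(\frac{l}{p}\right)=\sum_{m=1}^{p-k-1}\left(\frac{-m}{p}\right)=\left(\frac{-1}{p}\right)S(p-k-1).
\]
When $p\equiv 1\pmod 4$ this reduces to the reflection identity
\[
S(k)+S(p-k-1)=0 \qquad (1\le k\le p-1).
\]

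Now take $k=1$. Since $\left(\frac{1}{p}\right)=1$, we have $S(1)=1>0$, so the reflection identity forces $S(p-2)=-1<0$. This contradicts the characterization of $\mathcal{L}^+$ in Theorem \ref{thm3}, so no prime $p\equiv 1\pmod 4$ can lie in $\mathcal{L}^+$. Since every prime in $\mathcal{L}^+$ is odd (e.g.\ inspection of the definition, or the list begins at $3$), we conclude $p\equiv 3\pmod 4$.

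There is no real obstacle here; the only subtlety is the change of summation $m=p-l$ and the invocation of $\left(\frac{-1}{p}\right)=1$. Everything else is bookkeeping, so the proof should be short.
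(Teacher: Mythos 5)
Your proof is correct, and it rests on the same two ingredients as the paper's: the reduction to partial sums of Legendre symbols via Theorem \ref{thm3}, and the fact that $\left(\frac{-1}{p}\right)=1$ when $p\equiv 1\pmod 4$, i.e.\ the symmetry $\left(\frac{a}{p}\right)=\left(\frac{p-a}{p}\right)$. The execution differs in where the negative partial sum is found. The paper first deduces that the half-sum $\sum_{a=1}^{(p-1)/2}\left(\frac{a}{p}\right)$ vanishes and then splits into two cases according to the sign of $\left(\frac{(p-1)/2}{p}\right)$, exhibiting a partial sum equal to $-1$ at $k=\frac{p-1}{2}-1$ or $k=\frac{p-1}{2}+1$. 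You instead package the symmetry as the reflection identity $S(k)+S(p-k-1)=0$ and take $k=1$, getting $S(p-2)=-1$ in one step with no case analysis; note that $1\le p-2\le p$ for every odd prime, so this index is indeed covered by Theorem \ref{thm3}. Your route is a little cleaner, and the reflection identity is more informative: it shows the partial sums are antisymmetric about the midpoint, of which the paper's midpoint computation is the special case $k=\frac{p-1}{2}\pm 1$. The only caveat is cosmetic: your appeal to ``inspection'' to exclude $p=2$ is unnecessary, since Theorem \ref{thm3} (and the construction of $\lambda_p$ from the Legendre symbol) already concerns odd primes only.
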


\begin{proof} Note that if $p\equiv 1\pmod 4$, then %
$$\left(\frac{a}{p}\right)=\left(\frac{-a}{p}\right)$$ %
for all $1\leq a\leq p-1$, so that %
$$\sum_{a=1}^{\frac{p-1}{2}}\left(\frac{a}{p}\right)=0.$$

Consider the case that $\left(\frac{(p-1)/2}{p}\right)=1$. Then %
$$\sum_{a=1}^{\frac{p-1}{2}}\left(\frac{a}{p}\right)=\sum_{a=1}^{\frac{p-1}{2}-1}\left(\frac{a}{p}\right)+\left(\frac{(p-1)/2}{p}\right)=\sum_{a=1}^{\frac{p-1}{2}-1}\left(\frac{a}{p}\right)+1,$$ %
so that %
$$\sum_{a=1}^{\frac{p-1}{2}-1}\left(\frac{a}{p}\right)=-1<0.$$

On the other hand, if $\left(\frac{(p-1)/2}{p}\right)=-1$, then since $\left(\frac{(p-1)/2}{p}\right)=\left(\frac{(p-1)/2+1}{p}\right)$, we have %
$$\sum_{a=1}^{\frac{p-1}{2}}\left(\frac{a}{p}\right)=\sum_{a=1}^{\frac{p-1}{2}+1}\left(\frac{a}{p}\right)-\left(\frac{(p-1)/2+1}{p}\right)=\sum_{a=1}^{\frac{p-1}{2}+1}\left(\frac{a}{p}\right)+1,$$ %
so that %
\begin{equation*}\sum_{a=1}^{\frac{p-1}{2}+1}\left(\frac{a}{p}\right)=-1<0.\qedhere\end{equation*}
\end{proof}

%%%%%%%%%%%%%%%%%%%%%%%%%%%%%%%%%%%%%%%%%%%%%%%%%%%%%%%%%%%%%%%%
\section{A bound for $|L_p(n)|$}
%%%%%%%%%%%%%%%%%%%%%%%%%%%%%%%%%%%%%%%%%%%%%%%%%%%%%%%%%%%%%%%%

Above we were able to give exact bounds on the function $|L_p(n)|$. As explained in Remark \ref{remp}, this is not always possible, though an asymptotic bound is easily attained with a few preliminary results.

\begin{lemma}\label{period} For all $r,n\in\mathbb{N}$ we have $L_p(p^rn)=L_p(n)$.
\end{lemma}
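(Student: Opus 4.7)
The plan is to reduce the identity directly to the base-$p$ digit formula of Theorem \ref{cor1}. Write the base-$p$ expansion $n = a_0 + a_1 p + \cdots + a_k p^k$ with $0 \le a_j \le p-1$. Multiplication by $p^r$ simply shifts every digit up by $r$ places: the base-$p$ expansion of $p^r n$ is
$$p^r n = \underbrace{0 + 0\cdot p + \cdots + 0\cdot p^{r-1}}_{r \text{ zero digits}} + a_0 p^r + a_1 p^{r+1} + \cdots + a_k p^{r+k}.$$
So if we denote the digits of $p^r n$ by $b_0, b_1, \ldots, b_{r+k}$, then $b_j = 0$ for $0 \le j < r$ and $b_{r+i} = a_i$ for $0 \le i \le k$.

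Apply Theorem \ref{cor1} to $p^r n$:
$$L_p(p^r n) = \sum_{j=0}^{r+k} \sum_{l=1}^{b_j} \lambda_p(l).$$
Under the convention that an empty inner sum contributes $0$, the first $r$ terms (where $b_j = 0$) drop out, and the remaining terms are precisely $\sum_{i=0}^{k} \sum_{l=1}^{a_i} \lambda_p(l)$, which is $L_p(n)$ by another application of Theorem \ref{cor1}. Hence $L_p(p^r n) = L_p(n)$.

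Equivalently, one may appeal to Corollary \ref{theorem 2.2}: prepending zeros to a base-$p$ expansion leaves the count $N(\,\cdot\,, l)$ unchanged for every nonzero digit $l \in \{1, \ldots, p-1\}$, while the $l=0$ term in the formula contributes nothing because $\sum_{m \le 0}\left(\frac{m}{p}\right) = 0$.

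There is no real obstacle here; the argument is essentially the observation that multiplication by $p^r$ is a shift in base-$p$, which Theorem \ref{cor1} turns into an invariance of the partial sum. The only mild care needed is bookkeeping the empty inner sums from the leading zero digits, which is handled by the convention already in force in the statement of Theorem \ref{cor1}.
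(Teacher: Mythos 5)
Your proof is correct, but it takes a different route from the paper's. You deduce the lemma from the digit formula of Theorem \ref{cor1}: multiplication by $p^r$ merely prepends $r$ zero digits to the base-$p$ expansion, and zero digits contribute empty (hence zero) inner sums, so the right-hand side of \eqref{2g} is unchanged. Since Theorem \ref{cor1} is established earlier in the paper and its proof does not rely on this lemma, there is no circularity, and your bookkeeping of the empty sums is handled by the convention already stated there. The paper instead argues directly and more elementarily: it notes that $\lambda_p(kp+i)=\lambda_p(i)$ for $1\le i\le p-1$, so each complete block of $p$ consecutive integers contributes $\sum_{i=1}^{p-1}\left(\frac{i}{p}\right)=0$ from its non-multiples of $p$; what survives in $\sum_{k\le p^r n}\lambda_p(k)$ is the sum over multiples of $p$, and since $\lambda_p(pk)=\lambda_p(p)\lambda_p(k)=\lambda_p(k)$ this gives $L_p(p^r n)=L_p(p^{r-1}n)$, whence the claim by iteration. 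Your approach buys brevity and makes the invariance transparent (the formula only sees nonzero digits), at the cost of invoking the heavier Theorem \ref{cor1}; the paper's approach is self-contained and exhibits the cancellation mechanism directly, which is essentially the same vanishing-character-sum idea that underlies the proof of Theorem \ref{cor1} itself.
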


\begin{proof} For $i=1,\ldots,p-1$ and $k\in\mathbb{N}$, $\lambda_p(kp+i)=\lambda_p(i)$. This relation immediately gives for $k\in\mathbb{N}$ that $L_p(p(k+1)-1)-L_p(pk)=0$, since $L_p(p-1)=0$. Thus $$L_p(p^rn)=\sum_{k=1}^{p^{r}n} \lambda_p(k)=\sum_{k=1}^{p^{r-1}n} \lambda_p(pk)=\sum_{k=1}^{p^{r-1}n} \lambda_p(p)\lambda_p(k)=\sum_{k=1}^{p^{r-1}n} \lambda_p(k)=L_p(p^{r-1}n).$$ The lemma follows immediately.
\end{proof}

\begin{theorem}\label{Lmax} The maximum value of $|L_p(n)|$ for $n<p^i$ occurs at $n=k\cdot\sigma(p^{i-1})$ with value $$\max_{n<p^i}|L_p(n)|=i\cdot \max_{n<p}|L_p(n)|,$$ where $\sigma(n)$ is the sum of the divisors of $n$.
\end{theorem}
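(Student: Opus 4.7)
The plan is to deduce the theorem directly from the digit-expansion formula for $L_p$ given in Corollary \ref{theorem 2.2}. Write
$$S(l):=\sum_{m\le l}\left(\frac{m}{p}\right)\qquad (0\le l\le p-1),$$
so that, by Corollary \ref{theorem 2.2},
$$L_p(n)=\sum_{l=0}^{p-1}N(n,l)\,S(l),$$
where $N(n,l)$ is the number of digits equal to $l$ in the base-$p$ expansion of $n$. Since $S(0)=0$, only the indices $1\le l\le p-1$ contribute. Observe also that for $1\le n\le p-1$ the base-$p$ expansion of $n$ is just the single digit $n$, so $L_p(n)=S(n)$; hence
$$M:=\max_{n<p}|L_p(n)|=\max_{0\le l\le p-1}|S(l)|.$$

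Next I would prove the upper bound. If $n<p^i$, then the base-$p$ expansion of $n$ has at most $i$ digits, i.e., $\sum_{l=0}^{p-1}N(n,l)\le i$. By the triangle inequality,
$$|L_p(n)|\le\sum_{l=1}^{p-1}N(n,l)\,|S(l)|\le M\sum_{l=1}^{p-1}N(n,l)\le iM,$$
giving $\max_{n<p^i}|L_p(n)|\le i\cdot\max_{n<p}|L_p(n)|$.

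For the matching lower bound, let $k\in\{1,\dots,p-1\}$ be a digit achieving $|S(k)|=M$ (so that $|L_p(k)|=M=\max_{n<p}|L_p(n)|$). Consider
$$n=k\cdot\sigma(p^{i-1})=k(1+p+p^2+\cdots+p^{i-1}).$$
Since $k\le p-1$, this has base-$p$ expansion consisting of exactly $i$ digits, all equal to $k$; in particular $n\le(p-1)\sigma(p^{i-1})=p^i-1<p^i$. Thus $N(n,k)=i$ and $N(n,l)=0$ for $l\ne k$, so
$$L_p(n)=i\cdot S(k),\qquad |L_p(n)|=i\cdot M.$$
Combining the two bounds yields $\max_{n<p^i}|L_p(n)|=i\cdot\max_{n<p}|L_p(n)|$, attained at $n=k\cdot\sigma(p^{i-1})$.

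There is no substantive obstacle; the only mildly delicate point is making sure that equality in the triangle inequality is actually achieved inside the range $n<p^i$, which is why one packs all $i$ available digit-slots with the same maximizing digit $k$ via the identity $\sigma(p^{i-1})=1+p+\cdots+p^{i-1}$.
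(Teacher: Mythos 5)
Your proof is correct, and it is in fact more complete than the one in the paper: the paper disposes of this theorem with the single sentence ``This follows directly from Lemma \ref{period}'' (the periodicity $L_p(p^rn)=L_p(n)$), whereas you work instead from the digit formula of Corollary \ref{theorem 2.2}, writing $L_p(n)=\sum_{l}N(n,l)S(l)$ with $S(l)=\sum_{m\le l}\left(\frac{m}{p}\right)$, which is the tool that actually carries the argument (the periodicity lemma alone does not obviously yield the additive digit decomposition one needs). Your two halves are both sound: the triangle-inequality bound $|L_p(n)|\le M\sum_{l\ge 1}N(n,l)\le iM$ for $n<p^i$, and the extremal construction $n=k\,\sigma(p^{i-1})=k(1+p+\cdots+p^{i-1})<p^i$ whose expansion consists of $i$ copies of the maximizing digit $k$, giving $|L_p(n)|=i\,|S(k)|=iM$. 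You also correctly identify $M=\max_{n<p}|L_p(n)|=\max_{0\le l\le p-1}|S(l)|$ via the single-digit case, and you implicitly avoid the degenerate case $M=0$ since $S(1)=1$. The only cosmetic remark is that the theorem's phrasing ``occurs at $n=k\cdot\sigma(p^{i-1})$'' leaves $k$ unspecified; your reading of $k$ as a digit in $\{1,\dots,p-1\}$ attaining $\max_{n<p}|L_p(n)|$ is the intended one, and your argument makes that precise where the paper does not.
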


\begin{proof} This follows directly from Lemma \ref{period}.
\end{proof}

\begin{corollary}\label{Lpn} If $p$ is an odd prime, then $ |L_p(n)| \ll\log n;$ furthermore, $$\max_{n\leq x}|L_p(x)| \asymp\log x.$$
\end{corollary}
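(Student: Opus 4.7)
The plan is to deduce both the upper and lower bounds directly from Theorem \ref{Lmax}, writing $C_p := \max_{m<p}|L_p(m)|$ for the ``one period'' maximum that governs everything. The key observation that makes the corollary essentially immediate is that Theorem \ref{Lmax} identifies $\max_{n<p^i}|L_p(n)|$ exactly as $i \cdot C_p$, so both sides of the asymptotic equivalence reduce to bounding the integer $i$ against $\log_p x$.

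For the upper bound $|L_p(n)| \ll \log n$, I will fix $n \in \mathbb{N}$ and choose $i = \lfloor \log_p n \rfloor + 1$ so that $n < p^i$. Theorem \ref{Lmax} then yields
$$|L_p(n)| \leq \max_{m < p^i}|L_p(m)| = i \cdot C_p \leq C_p\bigl(\log_p n + 1\bigr),$$
which is $\ll_p \log n$. This handles the $\ll$ half with the implied constant $C_p/\log p$.

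For the matching lower bound, the only thing I need is $C_p > 0$, and this is free: $L_p(1) = \lambda_p(1) = 1$, so $C_p \geq 1$. Given large $x$, I will let $j$ be the largest integer with $p^j \leq x$, so that $j \geq \log_p x - 1$. By Theorem \ref{Lmax}, there is some $n^\ast < p^j \leq x$ (for instance of the form $k\cdot\sigma(p^{j-1})$) with $|L_p(n^\ast)| = j \cdot C_p \geq j$. Therefore
$$\max_{n\leq x}|L_p(n)| \geq j \geq \log_p x - 1 \gg \log x,$$
which together with the upper bound gives $\max_{n \leq x}|L_p(n)| \asymp \log x$.

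There is no real obstacle here; the content has already been done in Theorem \ref{Lmax}. The only point worth flagging is making sure $C_p$ is strictly positive, which is why I explicitly invoke $\lambda_p(1)=1$, and noting that the implied constants depend on $p$ (which is fine since $p$ is fixed in the statement).
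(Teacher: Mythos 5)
Your proof is correct and follows exactly the route the paper intends: the corollary is stated without proof as an immediate consequence of Theorem \ref{Lmax}, and your argument simply makes that deduction explicit (bounding $i$ by $\log_p n + 1$ for the upper bound, and using $C_p \geq |L_p(1)| = 1$ together with the attained maximum at some $n^\ast < p^j \leq x$ for the lower bound). The care you take in checking $C_p > 0$ is a worthwhile detail the paper leaves implicit.
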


%%%%%%%%%%%%%%%%%%%%%%%%%%%%%%%%%%%%%%%%%%%%%%%%%%%%%%%%%%%%%%%%
%\section{Conclusion}
%%%%%%%%%%%%%%%%%%%%%%%%%%%%%%%%%%%%%%%%%%%%%%%%%%%%%%%%%%%%%%%%

%We have considered a variant of the Liouville $\lambda$--function, $\lambda_p(n)$, together with its summatory function, $L_p(n)$. Within this analysis, we have provided a connection between $\lambda_p(n)$ and the Legendre symbol. This connection was exploited throughout this investigation, for example, within the proof of Corollary \ref{theorem 2.2}, to give a way to compute $L_p(n)$ exactly, and Lemma \ref{period}, which leads to the asymptotic result $L_p(x)\ll \log x.$

%%%%%%%%%%%%%%%%%%%%%%%%%%%%%%%%%%%%%%%%%%%%%%%%%%%%%%%%%%%%%%%%%%%

%\nocite{*}
\bibliographystyle{amsplain}

\end{document}